\newtheorem{theorem}{Theorem}
\newtheorem{lemma}[theorem]{Lemma}
\newtheorem{corollary}[theorem]{Corollary}
\title[Triple-Crossing Diagrams]{Triple-Crossing Number and Moves on Triple-Crossing Link Diagrams}
\author[Adams, Hoste, and Palmer]{
Colin Adams\\
Williams College\\
\\
Jim Hoste\\
Pitzer College\\
\\
Martin Palmer\\
Universit\"at Bonn
}
\thanks{The third author was partially supported by project KH3CF of the Initiative d'excellence at Universit\'e Sorbonne Paris Cit\'e (and also partially by UniversitŽ Paris 13 after the suspension of the IDEX at USPC) and Universit\"at Bonn.}
\begin{document}
\nocite{Adams1, Adams2, Adams3, Adams4, Adams5}
\maketitle
\begin{abstract} Every link in the 3-sphere has a projection to the plane where the only singularities are pairwise transverse triple points. The associated diagram, with height information at each triple point, is a {\it triple-crossing diagram} of the link. We give a set of diagrammatic moves on triple-crossing diagrams analogous to the Reidemeister moves on ordinary diagrams. The existence of $n$-crossing diagrams for every $n>1$ allows the definition of the {\it $n$-crossing number}. We prove that for any nontrivial, nonsplit link, other than the Hopf link, its triple-crossing number is strictly greater than its quintuple-crossing number.
\end{abstract}

\section{Introduction} The  classical theory of  knots and links is often approached via link {\it diagrams} and the well-known {\it Reidemeister moves}. A link diagram is a projection of the link to a 2-sphere, which we think of as a plane union a point at infinity,  having only transverse double points as singularities and with an indication at each double point of which strand is ``on top.'' This is done by erasing a small section of the under-crossing strand. The Reidemeister moves consist of local transformations which can be used to  alter a diagram while not changing the underlying link. In general, we will refer to any such diagrammatic change as a diagram {\it move}. The important result, proven by both Reidemeister \cite{Reidemeister} and Alexander and Briggs \cite{AlexanderBriggs}, is that two  diagrams represent the same  link if and only if they are related by a sequence of Reidemeister moves. Of course, we also allow any ambient isotopy of the diagram in the projection 2-sphere.  Thus we may treat the objects of knot theory as equivalence classes of diagrams.

When working with oriented links, there is a  corresponding set of oriented Reidemeister moves. In \cite{Polyak} it is shown that a set of four oriented Reidemeister moves, called $\Omega 1a$, $\Omega 1b$, $\Omega 2a$, and $\Omega 3a$,  and their inverses, are sufficient to pass between any two oriented diagrams of the same link. Later in this paper, we will extend these moves to four moves which we call  $C 1a$, $C 1b$, $C 2a$, and $C 3a$ pictured in Figure~\ref{CCC R moves}. If the dashed curves in the figure (which will be explained later) are ignored, then $C 1a$, $C 1b$, $C 2a$, and $C 3a$ reduce to $\Omega 1a$, $\Omega 1b$, $\Omega 2a$, and $\Omega 3a$, respectively.  If we ignore the orientations in $\Omega 1a$, $\Omega 1b$, $\Omega 2a$, and $\Omega 3a$ we obtain a set of three unoriented moves sufficient to pass between all unoriented diagrams of the same unoriented link.

Recently, several papers have explored the topic of link diagrams with {\it multicrossings}. See, for example,  \cite{Adams1}--\cite{Adams4}. In these diagrams, $n$ strands are allowed to cross at a single point in the plane (still pairwise transversely), creating what is known as an {\it $n$-crossing}. Now each $n$-crossing must be accompanied with a labeling of the strands, $1, 2, \dots, n$,  from top to bottom in order to depict the link in space. Many of the obvious results analogous to classical diagrams have been proven. 
For example, given any $n>1$, every link has an {\it $n$-diagram}, that is, one with only $n$-crossings. However, until now, no analog of the Reidemeister moves have been found for multicrossing diagrams. In Sections~\ref{moves on 3-diagrams}--\ref{proof of main theorem} we describe a set of moves on 3-diagrams and prove that they are sufficient to pass between all 3-diagrams of the same link, as long as the ``natural'' orientation of the 3-diagrams define the same oriented link, up to a certain equivalence. 

Because every link has an $n$-diagram for every $n$,  the {\it  $n$-crossing number}, $c_n(L)$,  of a link $L$ may be defined as the smallest number of $n$-crossings in any $n$-diagram of $L$.  See \cite{Adams1}. It is known that $c_2(L)>c_3(L)$ and $c_2(L)> c_4(L)$. Moreover, it is known that $c_n(L) \geq c_{n+2}(L)$ and $c_n(L) \geq c_{2n}(L)$ for all $n \geq 2$ and all $L$. See \cite{Adams5}. In Section~\ref{c_3 and c_5} of this paper we prove that for any  nontrivial, nonsplit link $L$, other than the Hopf link, $c_3(L) > c_5(L)$.

The authors thank the organizers of {\it Knots in Hellas}, a conference held in Olympia, Greece, July 17--23, 2016, where this work originated.

\section{Moves on 3-Diagrams}\label{moves on 3-diagrams}

Before describing our set of 3-diagram moves, we begin by noticing that an unoriented 3-diagram can be given a  natural orientation by using the checkerboard coloring of its complement. In fact, this is true for  all $(2n+1)$-diagrams for any $n\ge 1$, and is quite different from the case of $(2n)$-diagrams.

\begin{lemma}
The complementary regions of any $n$-diagram may be colored black and white, checkerboard fashion.
\end{lemma}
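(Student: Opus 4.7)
The plan is to reduce to the classical fact that the faces of a plane graph admit a proper $2$-coloring if and only if every vertex has even degree. Regarding an $n$-diagram $D$ as a graph embedded in $S^2$ whose vertices are the $n$-crossings and whose edges are the arcs joining them, each vertex has exactly $2n$ incident edge-ends, hence even degree, and the coloring exists.

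For a direct construction that will be useful later in the paper, I would write the underlying projection of $D$ as a union $\gamma_1 \cup \cdots \cup \gamma_k$ of smoothly immersed closed curves in $S^2$, one per link component. Fix a basepoint $p_0 \in S^2 \setminus D$. For each $p \in S^2 \setminus D$, define $w_i(p) \in \mathbb{Z}/2$ as the mod-$2$ count of transverse intersection points between $\gamma_i$ and any smooth arc from $p_0$ to $p$ that avoids the $n$-crossings and is transverse to each $\gamma_j$. Set $c(p) = \sum_i w_i(p) \bmod 2$ and color the region containing $p$ white if $c(p) = 0$ and black otherwise.

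Two things then need verification. First, $w_i(p)$ is independent of the chosen arc: any two such arcs concatenate to a loop in $S^2$ transverse to $\gamma_i$, and any generic loop meets a smoothly immersed closed curve in $S^2$ in an even number of points, since $H_1(S^2;\mathbb{Z}/2) = 0$. Second, $c$ flips across each arc of $D$: an arc of $D$ lies on exactly one component $\gamma_i$, so a small path crossing it transversally changes $w_i$ by $1$ while leaving every other $w_j$ unchanged, hence flips $c$.

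The only mild obstacle is well-definedness of $w_i$ in the presence of the $n$-crossings, where several sheets of the $\gamma_j$ meet at a single point. This is handled by a routine transversality argument: since the $n$-crossings form a finite set, the connecting arcs from $p_0$ to $p$ can always be perturbed to miss them and be transverse to every $\gamma_j$, and the mod-$2$ count is invariant under such perturbations. No further subtleties arise, so the construction yields the desired checkerboard coloring.
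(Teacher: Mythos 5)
Your proof is correct, but it takes a different route from the paper. The paper's argument is a one-step reduction to the classical case: perturb the strands near each $n$-crossing so that it resolves into $n(n-1)/2$ ordinary double points, observe that the resulting classical diagram is checkerboard colorable (a known fact), and note that this coloring descends to the original $n$-diagram when the perturbation is undone. You instead argue from first principles, either via the even-vertex-degree criterion for $2$-colorability of the faces of a plane graph (each $n$-crossing has degree $2n$), or via the explicit mod-$2$ intersection-number coloring $c(p)=\sum_i w_i(p)$. Both of your arguments are sound; the transversality and well-definedness points you flag are exactly the ones that need checking, and they go through since $H_1(S^2;\mathbb{Z}/2)=0$. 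The trade-off is that the paper's proof is shorter because it outsources the parity argument to the classical $2$-crossing case, whereas yours is self-contained, works uniformly for disconnected diagrams and for components with no crossings, and makes explicit that the coloring is canonical once a basepoint region is chosen (which is implicitly used later when the paper speaks of ``the'' two natural orientations).
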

\begin{proof} By slightly perturbing the strands near each multicrossing of an $n$-diagram $D$, each $n$-crossing can be separated into 
 $n(n-1)/2$ classical crossings, creating a classical diagram $d$ of the same link. Now $d$ can be checkerboard colored and that coloring induces one on $D$ when the perturbed strands are returned to their original positions.
\end{proof}

If $D$ is a $(2n+1)$-diagram, we can use the checkerboard coloring to orient $D$  by orienting the  boundary edges of each black region counterclockwise.  Because each multicrossing involves an odd number of strands, the orientations match up to give an orientation of $D$. We call this a {\it natural} orientation of $D$. Swapping the colors reverses the orientation. Thus there are two possible natural orientations. If $D$ is disconnected, meaning that the associated projection\footnote{By a {\it projection}, we mean an actual projection with transverse double points (or multi-points), as opposed to a {\it diagram} where heights have been indicated at every multi-point.} $P$ is a disconnected subset of the projection plane, then we will refer to the diagram associated to a maximal, connected, subset of $P$ as a  {\it subdiagram} of $D$.
If $D$ is disconnected, then 
a {\it piecewise natural} orientation of $D$ is one that is natural on each subdiagram of $D$. If $D$ is the union of $j$ disjoint subdiagrams, then there are $2^j$ piecewise natural orientations of $D$.

In any 3-diagram, call a 3-crossing {\it Type A} if, as we encircle the crossing, the strands are oriented in--out--in--out--in--out. Considering Type A crossings provides an alternate approach to orienting a 3-diagram.

\begin{lemma}\label{natural iff Type A} An orientation of a 3-diagram $D$ is piecewise natural if and only if all crossings are Type A.
\end{lemma}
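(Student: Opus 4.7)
I will prove the biconditional by handling the two directions separately; it suffices to treat the case that $D$ is connected (a single subdiagram), since each condition is defined per subdiagram.

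\emph{Forward direction.} Suppose $D$ carries a natural orientation induced by a checkerboard coloring in which the boundary of each black region is oriented counterclockwise. Fix a crossing $x$ of $D$. The six wedge-shaped complementary regions around $x$ alternate black-white-black-white-black-white. For each black wedge $W$ with bounding strand-ends at angles $a^\circ$ and $b^\circ$ (with $a<b$), the CCW boundary orientation on $W$ forces the strand-end on the clockwise side ($a^\circ$) to point outward from $x$ and the one on the counterclockwise side ($b^\circ$) to point inward. Because the three black wedges are every other wedge, piecing together these three local contributions yields the pattern in-out-in-out-in-out around $x$, which is the Type A condition.

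\emph{Converse direction.} Suppose every crossing of $D$ is Type A. The crucial local fact is that the alternation in-out-in-out-in-out around a 3-crossing is rigid: fixing the in/out label of any single strand-end determines the labels of all six, and hence the orientations of all three strands through the crossing. Consequently, at any crossing $x$, knowing the orientation of any one incident arc determines the orientations of all arcs incident to $x$.

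I propagate this globally. Choose any arc $\alpha_0$ of $D$; its given orientation forces the orientations of all arcs at each of its two endpoint crossings, which in turn force those at neighboring crossings, and so on. Since $D$ is connected, iterating this propagation across the underlying 6-valent graph visits every arc, and so the orientation of $\alpha_0$ uniquely determines the orientation of every arc of $D$. Hence $D$ admits at most two Type A orientations, corresponding to the two possible orientations of $\alpha_0$. By the forward direction, the two natural orientations of $D$ are Type A and induce opposite orientations on $\alpha_0$, so these are precisely the two Type A orientations. In particular, the given Type A orientation is natural, as required.

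\emph{Main obstacle.} The work lies in the local analysis at a single crossing: in the forward direction, checking that the CCW rule applied at three alternating black wedges assembles exactly into the alternating in-out pattern, and in the converse direction, checking the local rigidity of Type A. Global consistency of the propagation in the converse direction is not a separate hurdle: the forward direction already produces two a priori globally consistent Type A orientations, so the unique orientation obtained by propagating from an oriented $\alpha_0$ must coincide with one of them.
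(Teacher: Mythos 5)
Your proof is correct. The forward direction is the same local computation the paper leaves to the reader (checking that the CCW rule on the three alternating black wedges produces the alternating in--out pattern), but your converse takes a genuinely different route. The paper argues region by region: the Type A condition forces the boundary of every complementary region of a connected subdiagram to be coherently oriented, and then the checkerboard coloring exhibits the orientation as natural directly. You instead run a rigidity-and-counting argument: the alternating pattern at a Type A crossing is determined by the label of a single strand-end, so on a connected subdiagram a Type A orientation is determined by its restriction to one arc, giving at most two such orientations; since the two natural orientations are Type A and are reverses of each other, they exhaust the possibilities. Your approach has the virtue of sidestepping any discussion of complementary regions and their boundary coherence (and of why all black regions end up with the same handedness, a point the paper treats rather tersely), replacing it with a propagation along the 6-valent graph whose global consistency you correctly observe is supplied for free by the existence of the natural orientations. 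The paper's approach is more direct in that it identifies the checkerboard coloring witnessing naturality, whereas yours identifies the given orientation with a natural one by uniqueness; both are valid, and the reduction to connected subdiagrams at the outset is legitimate since both conditions are defined subdiagram by subdiagram.
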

\begin{proof}
If the orientation of a diagram is piecewise natural, then it is easy to see that every crossing is Type A. Conversely, suppose $D$ is   oriented so that every crossing is Type A  and that $D'$ is a  subdiagram of $D$. Let $R$ be a complementary region of $D'$. Because all the crossings are of Type A, it follows that the boundary edges of $R$ are all oriented in the same direction  as we go around the boundary of $R$. Therefore, if the complementary regions of $D'$ are checkerboard colored black and white, then the orientations on the boundaries of the black regions are either all clockwise or all counterclockwise.  Thus the orientation of $D$ is piecewise natural.
\end{proof}

Figures~\ref{1and2-move},  \ref{basepointMove} and \ref{bandMove} illustrate the {\it 1-move, 2-move,  basepoint move}, and {\it band move} on 3-diagrams. The 1-move can be thought of as first performing a Type I Reidemeister move on the strand on the right and then sliding the 1-gon  over the strand on the left. We may think of the 2-move as performing a Type II Reidemeister move on the two outer strands with the center strand lying  above. 
\begin{figure}[htbp]
\vspace*{13pt}
\hskip -.5 in \includegraphics*[scale=.25]{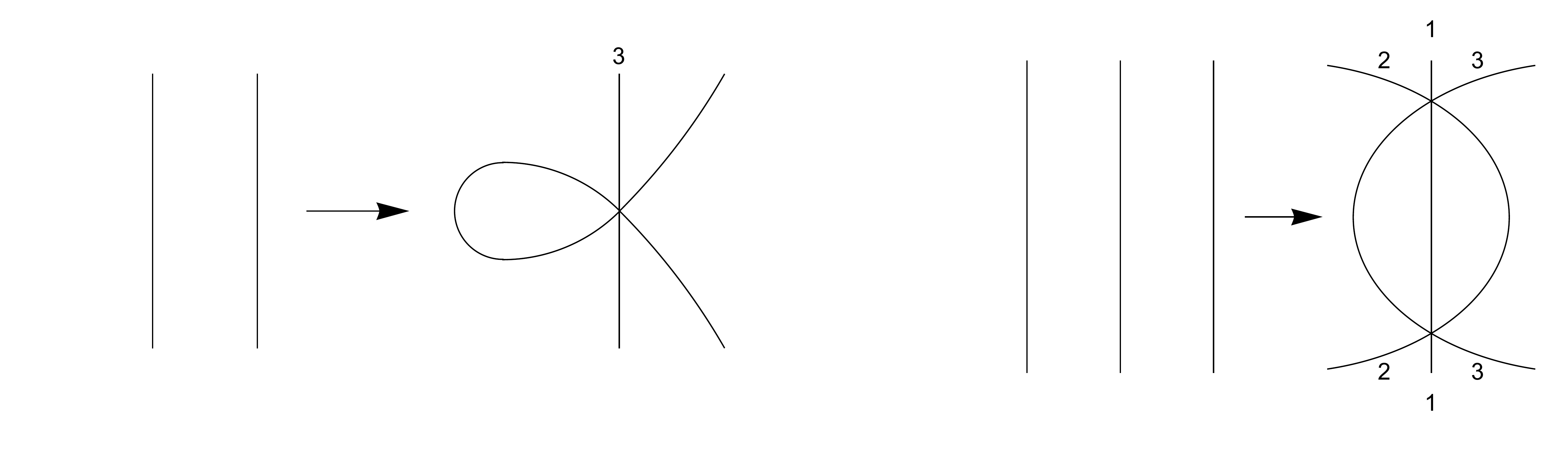}
\caption{The 1-move and 2-move.}
\label{1and2-move}
\end{figure}

A basepoint move is illustrated in Figure~\ref{basepointMove}, and is defined as follows.\footnote{Why we call this a basepoint move will become clear in Section~\ref{crossing circles}.} Suppose the 3-diagram $D$ contains a trivial 3-string tangle $T_1$ consisting of three parallel arcs, two of which are boundary parallel (the {\it outer} arcs) and one of which is not (the {\it central} arc). The  endpoints  of the three arcs come in   {\it opposite} pairs in the obvious way. Suppose that $\alpha$ is an arc of $D$ whose endpoints are a pair of opposite endpoints of $T_1$ belonging to the outer arcs. Suppose further that $\alpha$ is an {\it over-arc} of $D$, that is, for every crossing that $\alpha$ passes through, it lies on top. Moreover, assume that $\alpha$ passes through at least one 3-crossing and let $T_2$ be a small disk centered at that crossing. To make the move, we replace $T_1$ with a 3-crossing $T_1'$ where the central arc of $T_1$ becomes the lowest strand of $T_1'$, the endpoints of $\alpha$ are joined to form the highest strand, and the other pair of endpoints are joined to form the middle strand. Additionally, the crossing at $T_2$ is replaced with a trivial tangle $T_2'$ having as central strand what was the lowest strand of the crossing. Figure~\ref{basepointMove} shows how a basepoint move might appear. Notice that $\alpha$ may pass though many 3-crossings and $T_2$ does not need to be the 3-crossing nearest to $T_1$. Moreover, the heights of the strands in $T_2$ and the choice of endpoints  of $\alpha$ in $T_1$ may not appear the same as in Figure~\ref{basepointMove}. 

\begin{figure}[htbp]
\vspace*{13pt}
\centerline{\includegraphics*[scale=.4]{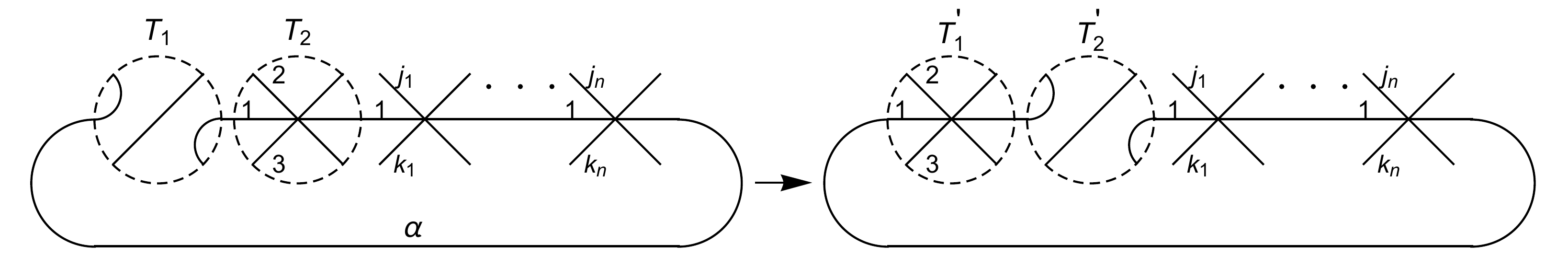}}
\caption{A possible basepoint move.}
\label{basepointMove}
\end{figure}

 To see that the basepoint move does not change the underlying link, simply pick up the arc $\alpha$ and then lay it down inside the tangle $T_1$. This creates a 2-crossing in $T_1$ and reduces all the 3-crossings to 2-crossings. We obtain the same diagram if, after the move, the over-arc is again picked up and laid down inside $T_2'$.

\begin{figure}[htbp]
\vspace*{13pt}
\centerline{\includegraphics*[scale=.4]{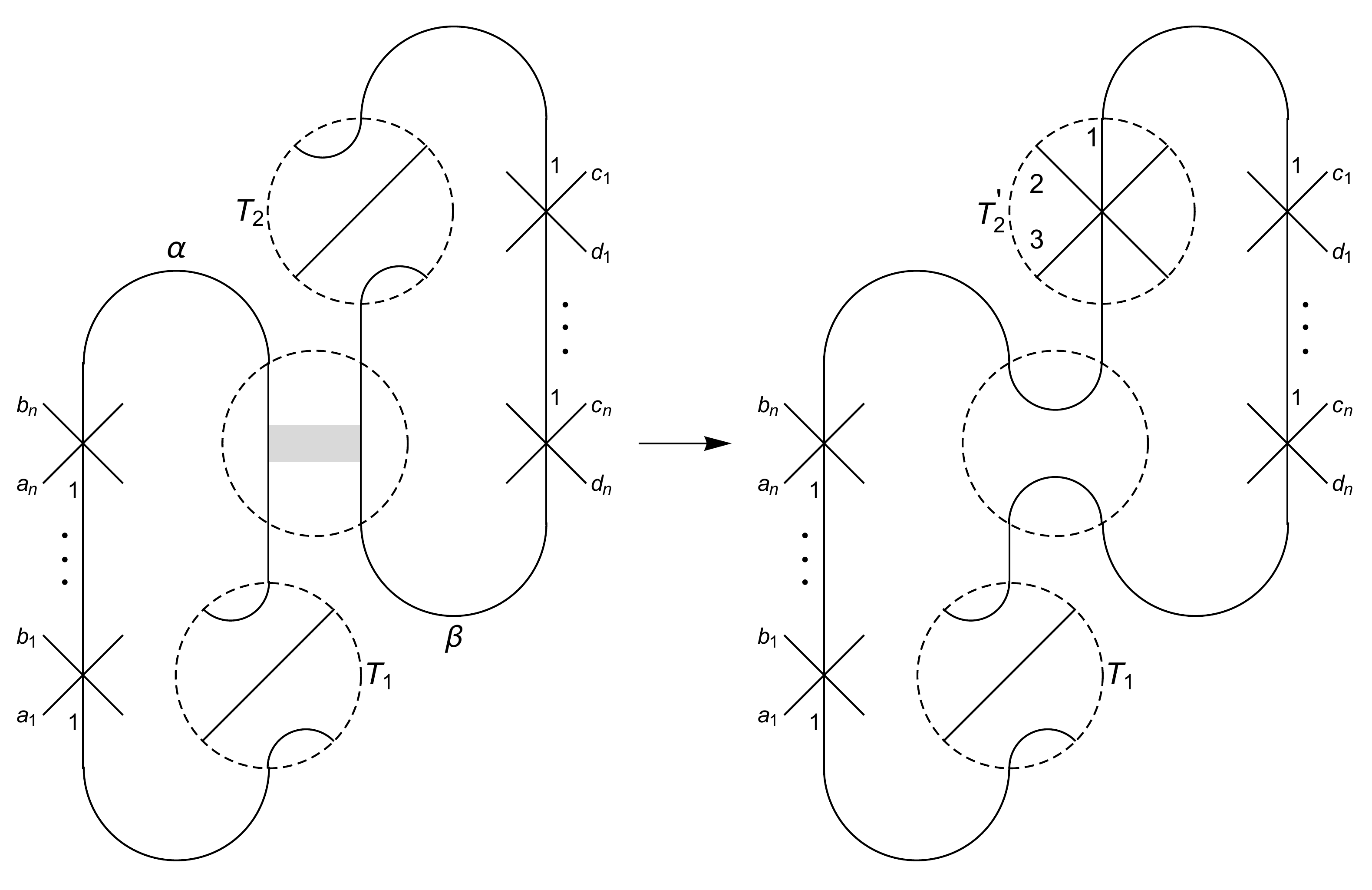}}
\caption{A possible band move.}
\label{bandMove}
\end{figure}

Figure~\ref{bandMove} depicts a possible band move. Suppose that $D$ contains two trivial 3-string tangles $T_1$ and $T_2$, each consisting of three parallel arcs, two disjoint over-arcs $\alpha$ and $\beta$, and a potential band move between $\alpha$ and $\beta$. (Recall that if $\gamma$ and $\delta$ are a pair of disjoint embedded arcs in the plane then a band move on $\gamma$ and $\delta$ is defined as follows. Consider an embedding of $I\times I$  (the band) into the plane such that $(\gamma \cup \delta)\cap (I \times I)=\partial I \times I$.  We can now alter $\gamma \cup \delta$ by replacing $\partial I \times I$ with $I \times \partial I$.)  Futhermore, suppose the endpoints of $\alpha$ are a pair of opposite endpoints in $T_1$, not the ends of the central arc, and the same is true of $\beta$ with respect to $T_2$. To perform the move, we perform the band move, joining $\alpha$ to $\beta$. Additionally, we replace $T_2$ with a 3-crossing $T_2'$ where the central arc of $T_2$ becomes the lowest strand of $T_2'$, the endpoints of $\beta$ are joined to form the highest strand of $T_2'$, and the other opposite pair of endpoints are joined to form the middle strand of $T_2'$. As with the basepoint move, the two 3-diagrams can be seen to represent the same link if we pass to 2-diagrams by lifting up the over-arcs and laying them down as the over-crossing strand of a single 2-crossing. Again, the situation need not look exactly as in Figure~\ref{bandMove}.

In addition to the moves illustrated in Figures~\ref{1and2-move},  \ref{basepointMove} and \ref{bandMove}, we include one more move. Let $\alpha$ be an arc of  a 3-diagram $D$ that passes through  no crossings and suppose $\alpha'$ is another arc whose interior does not intersect $D$, which has the same boundary points as $\alpha$, and which has no self-crossings. Replacing $\alpha$ with $\alpha'$ is called a  {\it trivial pass move}. If $D$ is connected, and we think of the diagram as lying on a 2-sphere rather than a plane, then the trivial pass move can be achieved by isotopy of the diagram. But if $D$ is disconnected, then  trivial pass moves may be nontrivial. For example, trivial pass moves allow one subdiagram of $D$ to be picked up and put down in a different complementary region of the rest of $D$. With classical diagrams, this can be accomplished with Reidemeister moves, but for 3-diagrams, a trivial pass move may not be a consequence of our other moves.

We may now state our main theorem:

\begin{theorem}\label{3-diagram moves}
Two  unoriented 3-diagrams $D_1$ and $D_2$ are related by a sequence of 1-moves, 2-moves, basepoint moves,  band moves, and trivial pass moves, if and only if  natural orientations on $D_1$ and $D_2$  define the same oriented link, up to reversal of maximal nonsplit sublinks.
\end{theorem}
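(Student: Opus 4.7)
The theorem is an ``if and only if,'' and I would treat the two directions separately. For the forward (``only if'') direction, my plan is a case-by-case local verification. Each of the five moves must be checked to preserve (a)~the underlying unoriented link, and (b)~the oriented link obtained from a natural orientation, up to reversing entire nonsplit subdiagrams. For (a), the informal arguments already given in the text — lift each over-arc of a basepoint or band move, lay it back down as the over-strand of a single 2-crossing, and compare the two resulting classical diagrams — are what I would formalize. For (b), I would invoke Lemma~\ref{natural iff Type A} and check in each local picture that the move carries Type~A crossings to Type~A crossings in some piecewise natural orientation. The trivial pass move is the step that genuinely forces the ``reversal of maximal nonsplit sublinks'' clause, since sliding a subdiagram across a strand of the rest of $D$ can flip the global checkerboard coloring of that subdiagram relative to the others.

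The backward (``if'') direction is the substantive part, and my strategy is to reduce it to the classical oriented Reidemeister theorem of Polyak. To a 3-diagram $D$ equipped with a choice, at each 3-crossing, of which strand is ``promoted'' above a 2-crossing — precisely the choice implicit in the lifting argument used to justify the basepoint and band moves — I would associate a 2-diagram $\hat D$ of the same oriented link. The first key lemma to establish is that any two such promotion choices for the same $D$ produce 2-diagrams that can be related by oriented Reidemeister moves, and, conversely, that the basepoint and band moves are exactly the 3-diagram manifestations of local changes of promotion data: a basepoint move transfers an over-arc along a neighboring 3-crossing, and a band move amalgamates two over-arcs through a common band.

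With that in place, given $D_1$ and $D_2$ whose natural orientations agree as oriented links up to reversal of maximal nonsplit sublinks, I would choose promotion data so that the corresponding 2-diagrams $\hat D_1$ and $\hat D_2$ represent the same oriented link (the reversal slack is absorbed by freely flipping the natural orientation of any one subdiagram). By Polyak's theorem, $\hat D_1$ and $\hat D_2$ are related by a sequence of $\Omega 1a$, $\Omega 1b$, $\Omega 2a$, $\Omega 3a$ moves and their inverses. The heart of the argument is then a lifting lemma: each such move on $\hat D_i$, after using basepoint and band moves to put the relevant strands into over-arc position, can be realized by a single 1-move or 2-move on $D_i$ (for $\Omega 1a$, $\Omega 1b$, $\Omega 2a$) or by a carefully chosen combination of basepoint moves and a 2-move (for $\Omega 3a$). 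Trivial pass moves handle the bookkeeping when a Reidemeister move on $\hat D_i$ passes a strand through a complementary region currently occupied by another subdiagram.

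The step I expect to be the main obstacle is the lifting of $\Omega 3a$. Realizing a triangle move among three strands at the 3-diagram level requires choreographing basepoint moves so that the three crossings involved can be collected into a local configuration to which a 2-move applies, and then redistributed on the other side of the triangle. A secondary subtlety is that this choreography must be carried out without inadvertently changing the natural orientation of a subdiagram in a way not licensed by the theorem's ``reversal of maximal nonsplit sublinks'' clause; controlling this will amount to verifying that each intermediate 3-diagram admits a piecewise natural orientation whose restriction to each subdiagram is consistent with the one chosen at the start.
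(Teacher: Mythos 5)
Your overall strategy --- reduce to Polyak's oriented Reidemeister theorem by associating a 2-diagram to each 3-diagram and then lifting the Reidemeister moves back to 3-diagram moves --- is the same as the paper's, and your forward direction is fine. But there is a genuine gap in the endgame of the backward direction. After you lift the Reidemeister sequence from $\hat D_1$ to $\hat D_2$, you arrive at a 3-diagram whose associated 2-diagram is $\hat D_2$ but whose ``promotion data'' (the way the 2-crossings of $\hat D_2$ are grouped into 3-crossings, and which strand detours over each group) need not agree with the promotion data of $D_2$. You must still connect these two 3-diagrams by the stated moves, and your ``first key lemma'' does not do this: it only asserts that two promotion choices give \emph{Reidemeister-equivalent 2-diagrams}, which is trivially true and circular here. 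The statement actually needed is that two groupings of the crossings of the \emph{same} oriented 2-diagram into 3-crossings yield 3-diagrams related by basepoint moves, band moves, and insertions of trivial data. In the paper this is Lemma~\ref{CCC's with the same even state}, and it is not a local verification: it rests on the classification of even states (Theorem~\ref{even states}), i.e.\ on the fact that any two crossing circle covers compatible with the same orientation induce the same even state and hence differ by band moves, basepoint changes, and trivial circles. Without some version of this global statement your argument does not close up.

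A second, related, underspecification: you treat each lifted Reidemeister move as a local problem (``after using basepoint and band moves to put the relevant strands into over-arc position''), but the obstruction is global, since the grouping of 2-crossings into 3-crossings is carried by circles that can wander through the whole diagram. The paper packages this as a crossing circle cover and proves a strengthened Polyak theorem (Theorem~\ref{CCC moves}) for 2-diagrams equipped with compatible covers; the genuinely delicate case is not $\Omega 3a$ (which you flag) but $\Omega 2a$, where one needs a parity argument to show that after clearing the region between the two strands exactly one crossing-circle strand remains --- unless the strands lie in different subdiagrams, which is precisely where the trivial pass move (and a newly inserted trivial circle with a basepoint) enters. You have correctly guessed the role of the trivial pass move, but the parity argument and the even-state machinery behind Lemma~\ref{CCC's with the same even state} are the missing ideas that make the whole reduction well-defined.
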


If $D_1$ and $D_2$ represent nonsplit links, the trivial pass move is not needed and the statement of the theorem can be made somewhat simpler.

\begin{corollary}\label{3-diagram moves on nonsplit links} Two unoriented 3-diagrams $D_1$ and $D_2$ of nonsplit links are related by a sequence of 1-moves, 2-moves, basepoint moves,  and band moves, if and only if  natural orientations on $D_1$ and $D_2$ define the same oriented link, up to complete reversal.
\end{corollary}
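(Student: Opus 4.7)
The plan is to derive the corollary directly from Theorem \ref{3-diagram moves} by (a) matching up the two notions of orientation equivalence in the nonsplit case, and (b) showing that trivial pass moves are unnecessary.

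First I would record the key topological fact: a 3-diagram represents a nonsplit link if and only if its underlying projection is connected. For one direction, if the projection has two components separated by a simple closed curve $\gamma$ in the plane, then the vertical cylinder $\gamma \times \mathbb{R}$ is disjoint from the link and, capped off at infinity in $S^3$, yields a splitting 2-sphere. The converse is immediate. Consequently, since 1-moves, 2-moves, basepoint moves, band moves, and trivial pass moves all preserve the ambient link type, every 3-diagram in any sequence of such moves that begins with a nonsplit-link diagram again represents a nonsplit link, and hence is itself connected.

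For the forward direction of the corollary, suppose $D_1$ and $D_2$ are related by a sequence of 1-, 2-, basepoint, and band moves. These are among the moves listed in Theorem \ref{3-diagram moves}, so natural orientations on $D_1$ and $D_2$ define the same oriented link up to reversal of maximal nonsplit sublinks. Since the underlying link is nonsplit, it is its own unique maximal nonsplit sublink, and this equivalence collapses to ``up to complete reversal.''

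For the converse, assume natural orientations on $D_1$ and $D_2$ give the same oriented link up to complete reversal. By the same collapse this coincides with agreement up to reversal of maximal nonsplit sublinks, so Theorem \ref{3-diagram moves} produces a sequence of 1-, 2-, basepoint, band, and trivial pass moves from $D_1$ to $D_2$. By the observation above, every diagram in this sequence is connected, so by the remark immediately preceding the statement of the theorem each trivial pass move in the sequence is realized by an ambient isotopy of the diagram in the projection 2-sphere; such isotopies are already built into diagram equivalence and may simply be excised. What remains is a sequence of 1-, 2-, basepoint, and band moves. The only step requiring thought is this final excision, which rests entirely on the previously noted fact that trivial pass moves between connected diagrams are $S^2$-isotopies; no substantive calculation is required.
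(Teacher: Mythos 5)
Your proof is correct, but the converse direction is argued by a genuinely different (and more modular) route than the paper's. The paper obtains the corollary by re-opening its proof of Theorem~\ref{3-diagram moves}: it observes that in the constructed sequence $d_1=e_1,\dots,e_k=d_2$ the trivial pass move is only ever invoked when some $e_i$ is disconnected, which cannot happen for nonsplit links, so the move is simply never introduced. You instead treat Theorem~\ref{3-diagram moves} as a black box, note that every intermediate diagram represents the same nonsplit link (since all the moves preserve link type) and is therefore connected, and then excise any trivial pass moves a posteriori using the paper's own observation that a trivial pass move on a connected diagram is an ambient isotopy in the projection $2$-sphere, which is already absorbed into diagram equivalence. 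Both arguments ultimately rest on the same fact---diagrams of nonsplit links are connected---but yours has the advantage of not depending on the internal structure of the proof of the main theorem, at the mild cost of invoking the isotopy characterization of trivial pass moves on connected diagrams. One small slip: your opening claim that a $3$-diagram represents a nonsplit link \emph{if and only if} its projection is connected is false in the ``if'' direction (a connected diagram can certainly represent a split link), and ``the converse is immediate'' should be deleted; fortunately you only ever use the correct implication, namely that nonsplit forces connected.
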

 
 \begin{corollary}\label{3-diagram moves on knots}  Two unoriented 3-diagrams of knots are related by a sequence of 1-moves, 2-moves, basepoint moves,  and band moves, if and only if they define  the same unoriented knot.
\end{corollary}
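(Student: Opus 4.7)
The plan is to obtain Corollary~\ref{3-diagram moves on knots} as a direct specialization of Corollary~\ref{3-diagram moves on nonsplit links}; essentially no new work is required beyond matching hypotheses.

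First, I would observe that a knot is by definition a one-component link, hence connected, and in particular nonsplit. Consequently any 3-diagram of a knot is a 3-diagram of a nonsplit link, so Corollary~\ref{3-diagram moves on nonsplit links} is available and trivial pass moves are not needed in the list of allowed moves.

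The ``only if'' direction I would handle first, and briefly: each of the 1-move, 2-move, basepoint move, and band move has been shown in Section~\ref{moves on 3-diagrams} to preserve the underlying link (the basepoint and band moves were verified by lifting the relevant over-arcs to convert the 3-diagrams on both sides into identical 2-diagrams, and the 1- and 2-moves reduce to classical Reidemeister moves on the outer strands). Hence any sequence of such moves preserves the underlying unoriented knot.

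For the ``if'' direction, suppose $D_1$ and $D_2$ are 3-diagrams of the same unoriented knot $K$. I would verify the orientation hypothesis of Corollary~\ref{3-diagram moves on nonsplit links}. Since $D_i$ is connected, it has exactly two natural orientations, and each one realizes one of the two possible orientations of the underlying knot $K$ (the two differing by complete reversal, i.e.\ by passage to $-K$). Choose any natural orientation of $D_1$ and any natural orientation of $D_2$; the resulting oriented knots are either identical or differ by complete reversal. In either case they represent the same oriented link up to complete reversal, so the hypothesis of Corollary~\ref{3-diagram moves on nonsplit links} holds, and that corollary produces the desired sequence of 1-moves, 2-moves, basepoint moves, and band moves relating $D_1$ to $D_2$.

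There is no genuine obstacle here: the substantive content lives in Theorem~\ref{3-diagram moves} (and hence in Corollary~\ref{3-diagram moves on nonsplit links}), and the only point worth writing down carefully is that for a knot the ``same oriented link up to complete reversal'' condition is automatic, because a knot admits only two orientations and they are exchanged by complete reversal.
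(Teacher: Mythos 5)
Your proposal is correct and matches the paper's intent: the paper obtains this corollary (together with Corollary~\ref{3-diagram moves on nonsplit links}) by observing in the proof of Theorem~\ref{3-diagram moves} that the trivial pass move is only ever needed for disconnected diagrams, and your deduction of the knot case from the nonsplit case is the same specialization. You make explicit the one point the paper leaves implicit --- that for a knot, ``defines the same unoriented knot'' automatically upgrades to ``natural orientations define the same oriented knot up to complete reversal,'' since a knot has only two orientations and they are exchanged by reversal --- which is exactly the right detail to record.
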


{\bf Remark:} Notice that a number of variations on all of the 3-diagram moves exist that still preserve the underlying link with natural   orientation. For example, in the 1-move, the 1-gon could lie under the other strand. In the 2-move, the central strand could lie between or below the other two strands.  Notice that in both the basepoint move and the band move, the over-arcs could be replaced with {\it under-arcs} instead (that is, arcs that pass beneath all other crossings). In Figures~\ref{basepointMove} and \ref{bandMove}, all the heights of 1 would then become 3, etc.. These variations on the moves still preserve the link type,  but are not needed in the statement of Theorem~\ref{3-diagram moves}. Instead, it is possible to derive these variations from the other moves.

We will prove Theorem~\ref{3-diagram moves} in Section~\ref{proof of main theorem}.

\section{State Markers and the Even State}

 Given a classical link projection, each crossing divides the plane locally into four regions. A {\it state marker} at a given crossing is a choice of two opposite regions indicated by placing dots in the corners of the  two regions near the crossing. A {\it state} for the projection is a choice of state marker at each crossing. An {\it even} state is one where each complementary region of the projection contains an even number of dots.
  
A state marker at a crossing determines a way to smooth that crossing, namely, smooth it in the way that connects the two regions marked with the dots. If a crossing is a {\it self-crossing}, that is, the two strands there belong to the same component, then exactly one of the two state markers will determine a smoothing which splits the component into two components. The other state marker determines a smoothing which will maintain a single component. Call the former the  {\it fission} state marker.
 
If the projection $D$ is oriented, then we may use the orientation to determine a state marker at each crossing as illustrated in Figure~\ref{orientation state}. If the crossing is a self-crossing, then the orientation induced state marker and the fission state marker coincide. If we choose the state marker  at every crossing by using the orientation, we call this the {\it orientation induced state} of $D$. 
\begin{figure}[htbp]
\vspace*{13pt}
\centerline{\includegraphics*[scale=.15]{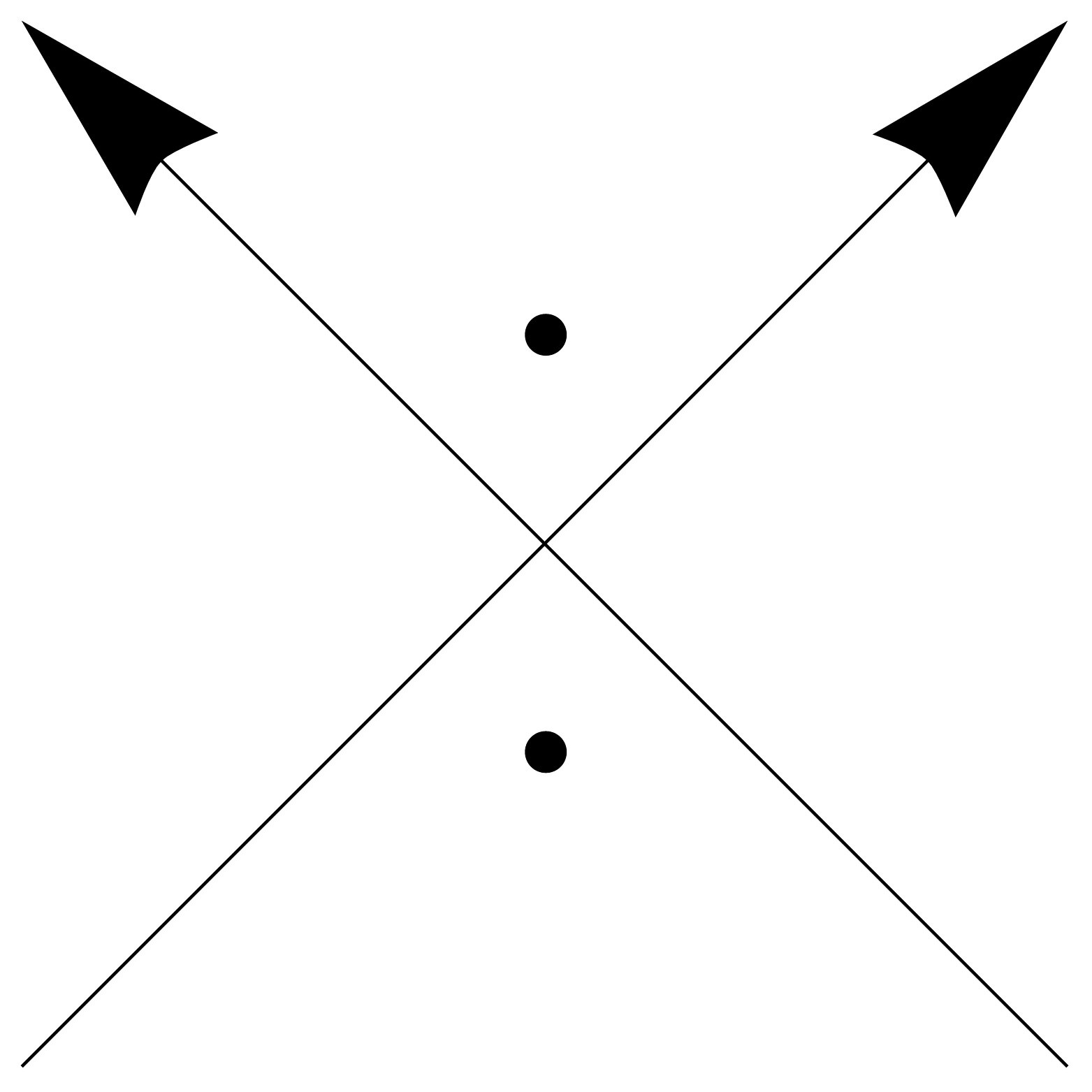}}
\caption{The state marker induced by the orientation.}
\label{orientation state}
\end{figure}

\pagebreak
\begin{theorem}\label{even states}
Suppose $D$ is a classical link projection. Then all the following are true.
\begin{enumerate}
 \item\label{orientation induced states are even} An orientation induced state of $D$ is even.
\item\label{orientations that induce the same state} If $D$ is connected, then orientations ${\mathcal O}_1$ and ${\mathcal O}_2$ of $D$ induce the same state if and only if they are equal or one is the reverse of the other.
\item\label{even states  are orientation induced} Every even state of $D$ is the orientation induced state for some orientation of $D$.
\item\label{number of states} If $D$  is a connected diagram of a link with $k$  components, then $D$ has $2^{k-1}$ distinct even states.
\end{enumerate}
\end{theorem}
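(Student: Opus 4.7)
For (1), I will fix a complementary region $R$ and traverse $\partial R$ as a closed walk (a disjoint union of closed walks when $R$ is not simply connected). From the convention in Figure~\ref{orientation state}, a corner of a crossing is dotted precisely when the two half-edges meeting at that corner share the same ``in/out'' status with respect to the link orientation. Attach to each oriented edge of $\partial R$ the sign $\epsilon = +1$ if the link orientation on that edge agrees with the direction of traversal and $\epsilon = -1$ otherwise. A one-line case check at each corner shows that the corner is dotted iff $\epsilon$ switches between the two edges meeting there. (If $R$ touches a crossing at two opposite corners, then $\partial R$ visits that crossing twice and each visit is a genuine turn, so no extra bookkeeping is needed.) Since $\epsilon$ returns to its initial value around any closed walk, the number of switches is even, and so is the number of dotted corners of $R$.

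For (2), the orientation-induced marker at a crossing depends only on the four ``in/out'' labels of the incident half-edges, and is preserved by reversing all four simultaneously---equivalently, by reversing both strands at the crossing. Hence, if $\mathcal{O}_1$ and $\mathcal{O}_2$ induce the same state, then at each crossing they agree on all four incident edges or disagree on all four. Connectedness of $D$ propagates this local dichotomy along edges: fixing the choice on one edge forces the same choice at every adjacent crossing, and so on throughout $D$, yielding $\mathcal{O}_2 = \mathcal{O}_1$ or $\mathcal{O}_2 = -\mathcal{O}_1$ globally.

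For (3), I will construct an orientation realizing a given even state by orienting a single edge and extending through successive crossings; at each crossing, the state uniquely determines the orientations of the three other incident half-edges in terms of the incoming one. The obstruction to a consistent global extension is monodromy around closed walks in $D$. Assigning the sign $-1$ to each traversed edge, and to each vertex the sign $+1$ if the walk turns via a dotted corner and $-1$ if the walk turns via a non-dotted corner or passes straight through on the same strand, a direct calculation gives monodromy $(-1)^{\delta}$ around a closed walk, where $\delta$ is the number of dotted corners encountered. Planarity lets me reduce to the cycles $\partial R$ (which generate the cycle space of $D$), where $\delta$ equals the number of dotted corners of $R$ and hence is even by hypothesis, so the orientation exists globally and by construction induces the given state. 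This monodromy bookkeeping---separating the three kinds of vertex-behavior and invoking planar duality to reduce consistency around arbitrary cycles to the face-boundary check---is the main technical obstacle.

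For (4), parts (2) and (3) together establish a two-to-one correspondence between orientations of $D$ and even states of the connected diagram $D$. A link with $k$ components admits $2^k$ orientations (two independent choices per component), giving $2^{k-1}$ distinct even states.
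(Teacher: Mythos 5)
Your parts (1), (2) and (4) follow the paper's proof essentially verbatim: the traversal-of-$\partial R$ parity count for (1), the local agree-or-reverse dichotomy propagated by connectedness for (2), and the two-to-one correspondence for (4). Part (3) is where you genuinely diverge. The paper proves (3) by induction on the number of double points: it smooths one crossing according to its marker, observes that the resulting state is still even, invokes the inductive hypothesis, and then runs a case analysis (does the smoothing disconnect the projection or not?) together with a dot-parity count along the merged region to show the two smoothed arcs come out parallel, so the crossing can be reinstated compatibly. You instead set up the propagation rule as a $\mathbb{Z}/2$ local system on the underlying $4$-valent plane graph and identify the obstruction to a global orientation with the monodromy homomorphism on the cycle space; since bounded face boundaries generate the cycle space of a plane graph, consistency reduces to the face-boundary check, where your formula $(-1)^{\delta}$ together with evenness of the state kills the obstruction. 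Both arguments are correct. The paper's is more elementary and self-contained; yours is more conceptual, avoids the induction and its case split, and makes visible exactly where evenness enters (vanishing of the obstruction class on a generating set of cycles). Two points in your write-up are asserted rather than carried out and deserve a line each: the ``direct calculation'' that the monodromy is $(-1)^{\delta}$ (it is the same corner-by-corner analysis as in your part (1) --- a dotted turn flips the sign of the link orientation relative to the traversal, a non-dotted turn or a straight pass preserves it, and your extra per-edge and per-vertex signs cancel in pairs); and the claim that face boundaries generate the cycle space, which for a disconnected projection requires noting that a single face boundary may be a disjoint union of closed walks lying in different components, but that the bounded face boundaries still form a basis of the full cycle space by the Euler-characteristic dimension count, so vanishing of the monodromy on them still forces it to vanish on every closed walk in every component.
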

\begin{proof}
To prove (\ref{orientation induced states are even}), suppose $D$ is oriented, that $S$ is the orientation induced state, and that $R$ is a complementary region of $D$.  As we traverse the boundary of $R$, we encounter an even number of vertices where the orientation of the edges on $\partial R$ reverses from clockwise to counter-clockwise or vice versa. These are exactly the vertices that contribute a dot to $R$. Thus $R$ contains an even number of dots and $S$ is an even state. 

For~(\ref{orientations that induce the same state}), because reversing all orientations in Figure~\ref{orientation state} does not change the placement of the dots, it follows that if one orientation is the reverse of the other then both induce the same state. Conversely, if orientations  ${\mathcal O}_1$ and ${\mathcal O}_2$ induce the same state, then at each crossing they either agree or are the reverse of each other. Because $D$ is connected, it follows that they are equal at every crossing, or opposite at every crossing. (This is false if $D$ is not connected.)

Let $S$ be an even state of $D$. We will prove (\ref{even states  are orientation induced}) by induction on the number of double points in $D$. If $D$ has one double point, the result is obvious. Now suppose that $D$ has more than one double point. Pick one and smooth it as in Figure~\ref{smoothing}, to obtain a new projection $D'$.
\begin{figure}[htbp]
\vspace*{13pt}
\centerline{\includegraphics*[scale=.4]{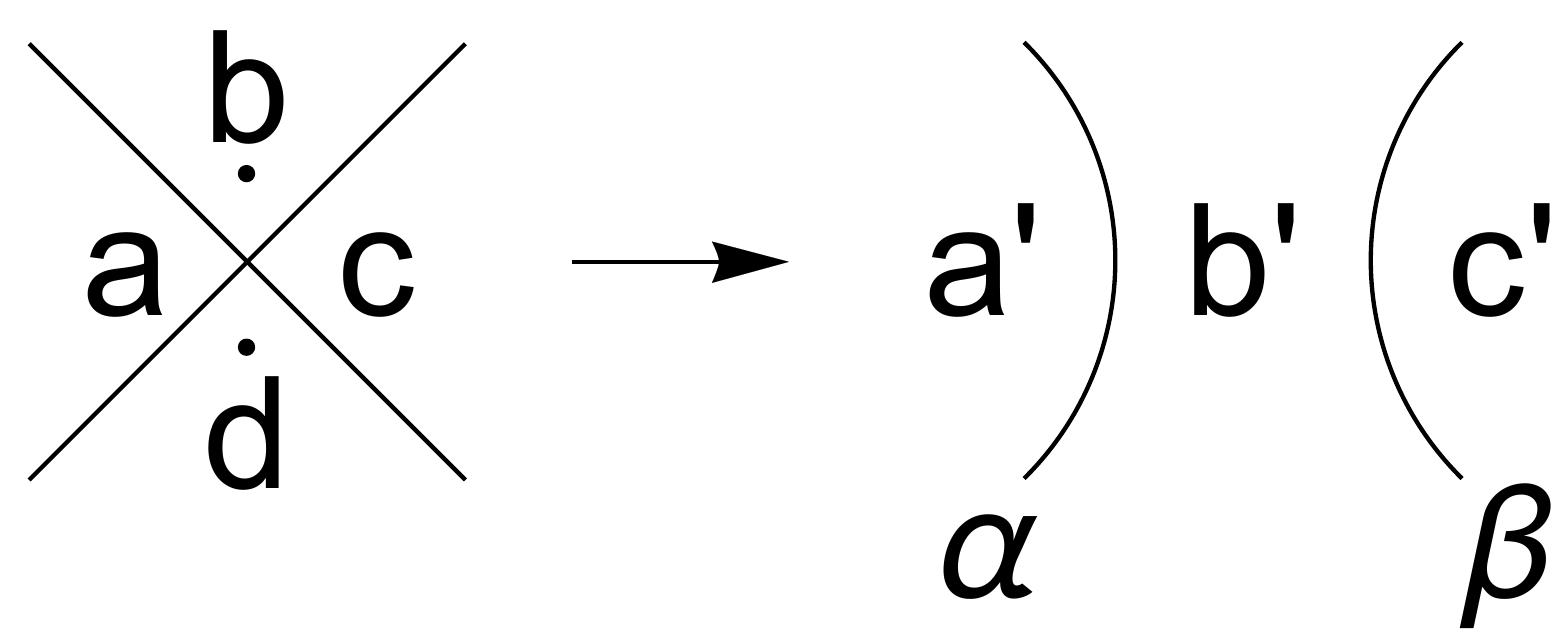}}
\caption{Smoothing a crossing.}
\label{smoothing}
\end{figure}
By forgetting the state marker at the crossing that was removed by smoothing, we obtain a state $S'$ of $D'$. The state $S'$ is even because the regions $b$ and $d$ each contain an even number of dots and, hence, so does the region $b'$ in $D'$. Because $D'$ has one fewer crossing than $D$, there exists an orientation of $D'$ for which $S'$ is the induced state. If $b$ and $d$ are really the same region, then $D'$ is disconnected with $\alpha$ and $\beta$ lying in disjoint subdiagrams of $D'$. If necessary, we may reverse all the orientations of the subdiagram containing $\alpha$ so that $\alpha$ and $\beta$ are oriented parallel to each other. As already proven,  this will not change the even state $S'$. If instead,   $b$ and $d$ are distinct regions, then  we may traverse the boundary of $b'$, starting at $\alpha$ and reach $\beta$. One way takes us essentially around the boundary of $b$, the other way around the boundary of $d$. But in either case, we pass an odd number of dots because both $b$ and $d$ each contain an even number of dots. As we pass each dot, the orientation of the edges on the boundary of $b'$ is reversed. Thus when we get to $\beta$ we see that it is oriented parallel to $\alpha$. Hence, in both cases, there is an orientation of $D'$ for which $S'$ is the induced state and moreover, $\alpha$ and $\beta$ are parallel. We may now use this orientation to obtain an orientation of $D$ that induces $S$.

Finally, for~(\ref{number of states}), if $D$ is a connected projection of a link with $k$ components, we now see that the map from the set of orientations of $D$ to the set of even states of $D$ is a 2--\,to\,--1 surjection. The total number of orientations of $D$ is $2^k$ and hence the total number of even states is $2^{k-1}$. 
\end{proof}

 In the case of a knot we have
   \begin{corollary}
If $D$ is a knot projection, then $D$ has a unique even state.
\end{corollary}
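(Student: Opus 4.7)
The plan is to deduce this corollary directly from part~(\ref{number of states}) of Theorem~\ref{even states}, since a knot is just a link with $k=1$ component. The only thing one needs to verify before applying that part of the theorem is that a knot projection $D$ is automatically connected: but $D$ is the image of a single circle $S^1$ under a continuous map to the plane, hence is the continuous image of a connected space and therefore connected.

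Once connectedness is in hand, I would simply invoke part~(\ref{number of states}) with $k=1$ to conclude that the number of even states of $D$ equals $2^{k-1} = 2^{0} = 1$. Since we already know from part~(\ref{orientation induced states are even}) that orientation-induced states are even, and a knot admits two orientations that (by part~(\ref{orientations that induce the same state})) induce the same state, such an even state exists. Thus there is exactly one.

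I do not anticipate any real obstacle here; the corollary is essentially a specialization of the counting statement in the theorem, and the only subtlety worth a sentence is the observation that a knot projection is connected (so that the hypothesis of part~(\ref{number of states}) is met).
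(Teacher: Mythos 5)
Your proposal is correct and matches the paper's (implicit) argument: the corollary is stated as an immediate specialization of part~(4) of Theorem~\ref{even states} to the case $k=1$, and your observation that a knot projection is connected (being the continuous image of a circle) is the only hypothesis that needs checking. Nothing further is required.
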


\section{Crossing Circles}\label{crossing circles}

Given a classical diagram $D$,  a {\it crossing circle} for $D$ is a circle $C$ embedded in the projection 2-sphere that  intersects $D$ only at crossings. Additionally, at each such crossing, the two strands of $D$ are each transverse to $C$.  The crossing circle is {\it trivial} if it contains no crossings of $D$. A finite set of disjoint crossing circles that together contain all crossings of $D$, and moreover, where a crossing  has been chosen on each nontrivial crossing circle,  will be called a {\it   crossing circle cover} of $D$, or more concisely, a $CCC$ of $D$. We will refer to the chosen crossing on each crossing circle as the {\it basepoint}. If no basepoints have been chosen on the crossing circles, we will refer to the collection of circles as an {\it unmarked} $CCC$.

Each $CCC$ of a diagram $D$ determines an even state of the underlying projection by choosing the state marker at each crossing that labels the  regions containing the crossing circle. Moreover, it is easy to see that every even state determines an umarked $CCC$, although not uniquely. Because there are an even number of dots in each region, we may connect them in pairs with disjoint arcs inside each region and then piece the arcs together to obtain an unmarked $CCC$. There may be more than one way to do this. However, all unmarked $CCC$'s associated to the same even state are related by  band moves  and the introduction or deletion of a trivial crossing circle.  Thus, Theorem~\ref{even states} implies that every diagram has  a $CCC$. In the case of a knot diagram, the even state is unique. Thus any two $CCC$'s of the same knot diagram differ by band moves, change of basepoints, and insertion or deletion of trivial crossing circles. For links, we must additionally require that the two $CCC$'s determine the same even state in order that they be so related. We summarize these statements in the following result.

\begin{lemma}\label{CCC's with the same even state}
Two $CCC$'s of a classical diagram $D$  determine the same even state of $D$ if and only if they differ by band moves, change of basepoints, and insertion or deletion of trivial crossing circles.
\end{lemma}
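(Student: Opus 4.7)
The plan is to prove the two directions separately, with the reverse direction immediate and the forward direction carrying the substance.

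For $(\Leftarrow)$, I would verify that each of the three operations preserves the even state determined by a CCC. A change of basepoints does not alter the underlying unmarked CCC, and so leaves the state markers unchanged. A trivial crossing circle passes through no crossings of $D$, so inserting or deleting one neither introduces nor removes any dot. A band move is performed inside a complementary region of $D$ (the band must be disjoint from $D$ in order to keep the CCC transverse to $D$ only at crossings), and hence does not alter the set of crossings through which the CCC passes, nor which corners at those crossings are marked.

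For $(\Rightarrow)$, suppose $C_1$ and $C_2$ induce the same even state $S$. First I would reduce to comparing the underlying unmarked CCCs, since basepoint changes can be applied last. In each complementary region $R$ of $D$, the state $S$ determines a fixed collection of dots on $\partial R$; the intersections $C_1\cap R$ and $C_2\cap R$ are two collections of pairwise disjoint embedded arcs in $R$ realizing non-crossing matchings $M_1(R)$ and $M_2(R)$ of these dots. The core of the proof is the combinatorial fact that any two non-crossing matchings of a fixed finite set of boundary points of a disk are related by a sequence of \emph{flips}, where a flip inside an embedded band swaps a pairing $\{ab,cd\}$ with $\{ac,bd\}$ or $\{ad,bc\}$. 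Each such flip is precisely a band move in the sense of the paper. Once the local matchings have been made to agree region by region, the two unmarked CCCs coincide, and basepoint moves finish the argument.

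The main obstacle is the bookkeeping of two genuine complications in the course of this sequence of flips. First, complementary regions of $D$ need not be disks when $D$ is disconnected: an arc in such a region may wrap around an interior sub-diagram, so that two collections of disjoint arcs can realize the same combinatorial matching while being non-isotopic. I would dispose of this by inserting a trivial crossing circle around each hole of $R$, which effectively reduces the region to a disk and makes the flip-connectivity result applicable. Second, a band move is local in one region but acts globally on the CCC: a single flip may split one circle into two or merge two circles into one, and so a planned sequence of local flips can temporarily produce a circle passing through no crossings (to be deleted as a trivial crossing circle), or require a trivial crossing circle to be inserted first in order to enable a later flip. Verifying that these auxiliary circles are always trivial crossing circles, and that the promised equality of unmarked CCCs can indeed be realized stepwise using only the operations allowed by the statement, is the technical heart of the argument.
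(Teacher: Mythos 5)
Your plan is correct and is essentially the argument the paper has in mind; in fact the paper states this lemma with no formal proof, merely asserting in the preceding paragraph that all unmarked $CCC$'s realizing a given even state are related by band moves and insertion or deletion of trivial crossing circles. Your region-by-region reduction to flip-connectivity of non-crossing matchings of the dots, together with the explicit treatment of non-disk complementary regions and of the global split/merge effect of a single band move, supplies exactly the details the paper leaves implicit.
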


If we consider oriented  diagrams, then we can demand that any $CCC$ of the diagram be {\it compatible} with the orientation, that is, the  state determined by the $CCC$ matches the orientation induced  state. Lemma~\ref{CCC's with the same even state} now implies that if two $CCC$'s are both compatible with the same oriented diagram, then they differ by band moves, change of basepoints, and insertion or deletion of trivial crossing circles.

\begin{theorem}\label{CCC moves} Suppose that $D_1$ and $D_2$ are two  oriented 2-diagrams, each equipped with a compatible $CCC$. Then $D_1$ and $D_2$ represent the same oriented link, up to orientation reversal on maximal nonsplit sublinks, if and only if one can be changed to the other by the following moves:
\begin{enumerate}
\item Reversing the orientation on any subdiagram of a diagram.
\item Insertion or deletion of a trivial crossing circle.
\item The four Reidemeister moves, and their inverses,  on compatible $CCC$-equipped diagrams  shown in Figure~\ref{CCC R moves}.
\item Changing the $CCC$ by moving the basepoint on one crossing circle.
\item Changing the $CCC$ by a band move. If the band move splits a nontrivial crossing circle into two nontrivial crossing circles, we must introduce a new basepoint. If the band move joins two nontrivial crossing circles together, then one basepoint is removed.
\end{enumerate}
\end{theorem}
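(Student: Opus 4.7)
The plan is to treat the two directions separately. The forward direction is a local inspection: moves of type (3) are oriented Reidemeister moves and therefore preserve the underlying oriented link; moves (2), (4), and (5) alter only the CCC and not the underlying diagram; and move (1) produces exactly the allowed equivalence of reversing orientations on maximal nonsplit sublinks. At each step one verifies by inspection of Figure~\ref{CCC R moves}, or directly for the non-Reidemeister moves, that the CCC remains compatible with the (possibly modified) orientation.

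For the backward direction, suppose that $D_1$ and $D_2$ represent the same oriented link up to orientation reversal on maximal nonsplit sublinks. First, using move (1) to reverse orientations on the appropriate subdiagrams of $D_1$, we may assume that $D_1$ and $D_2$ genuinely carry orientations inducing the same oriented link. Polyak's theorem~\cite{Polyak} then supplies a sequence of the four oriented Reidemeister moves $\Omega 1a$, $\Omega 1b$, $\Omega 2a$, $\Omega 3a$ and their inverses transforming $D_1$ into $D_2$ as oriented diagrams, with the CCC temporarily ignored.

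The main work is to promote this classical sequence to one on CCC-equipped diagrams. At each Reidemeister step we pre-adjust the current CCC, using only moves (2), (4), and (5), so that in a neighborhood of the move it matches the ``before'' picture of the appropriate $C$-move of Figure~\ref{CCC R moves}; the Reidemeister move is then realized as an instance of (3). The crucial enabling fact is Lemma~\ref{CCC's with the same even state}, which says that any two compatible CCCs on the same oriented diagram differ by precisely the moves (2), (4), and (5). Thus it suffices, for each Reidemeister move, to exhibit \emph{some} compatible CCC having the desired local form, and then reach it from the given CCC by Lemma~\ref{CCC's with the same even state}.

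The main obstacle is this per-move compatibility check: producing, for each of the four oriented Reidemeister moves (and especially for $\Omega 3a$, where three crossings interact nontrivially), some compatible CCC whose local picture matches the prescribed $C$-move. Once this is established, iterating along the Polyak sequence carries us all the way to $D_2$ equipped with some compatible CCC; a final application of Lemma~\ref{CCC's with the same even state} reconciles this CCC with the originally prescribed one on $D_2$ via further moves (2), (4), and (5), completing the argument.
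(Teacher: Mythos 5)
Your overall architecture matches the paper's: verify the forward direction locally, then lift a Polyak sequence of oriented Reidemeister moves to $CCC$-equipped diagrams by pre-adjusting the $CCC$ with moves (2), (4), (5) before each step, and reconcile the two resulting $CCC$'s on the final diagram via Lemma~\ref{CCC's with the same even state}. However, there are two genuine gaps. First, your opening reduction --- ``using move (1) on $D_1$ we may assume $D_1$ and $D_2$ carry orientations inducing the same oriented link'' --- is not valid in general. Move (1) reverses orientation on a \emph{subdiagram}, i.e.\ a connected component of the projection, and a single connected subdiagram of $D_1$ may contain several maximal nonsplit sublinks (a connected diagram can represent a split link). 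So the reversal you need may not be achievable by move (1) applied to $D_1$ itself. The paper avoids this by routing through an intermediate diagram $D$ of $|L_1|$ chosen so that its subdiagrams are exactly the maximal nonsplit sublinks, orienting it two ways to get $\overline D_1$ and $\overline D_2$, lifting Polyak sequences $D_i\to\overline D_i$ separately, and applying move (1) only on $\overline D_1$, where it does realize arbitrary reversals of maximal nonsplit sublinks.

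Second, you explicitly defer ``the per-move compatibility check'' --- producing, for each Reidemeister move, a compatible $CCC$ matching the local before-picture of the corresponding $C$-move --- and this is precisely where the mathematical content of the proof lives. It is not automatic: for $C2a$ one must show that the two arcs $\alpha$ and $\beta$ can be arranged to have exactly one crossing-circle strand between them. The paper proves this with a parity argument: traversing the boundary of the region between $\alpha$ and $\beta$, the edge orientations reverse exactly at crossings whose circle enters that region, so any transverse arc from $\alpha$ to $\beta$ meets an odd number of crossing-circle strands when $\alpha$ and $\beta$ lie in the same subdiagram; band moves then reduce this number to one, and in the remaining (different-subdiagram) case a trivial crossing circle is inserted and a basepoint added. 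Without this argument (and the analogous, easier adjustments for $C1a$, $C1b$, $C3a$ and the inverses), the claim that every $\Omega$-move extends to a $C$-move is unsupported, so the proof is incomplete as written. Your idea of invoking Lemma~\ref{CCC's with the same even state} to move between any two compatible $CCC$'s is sound and is essentially how the paper reconciles the endpoints, but it does not by itself supply the existence of a compatible $CCC$ in the required local form.
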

\begin{figure}[htbp]
\vspace*{13pt}
\begin{center}

\begin{tabular}{c}
\begin{tabular}{ccc}
\includegraphics*[scale=.3]{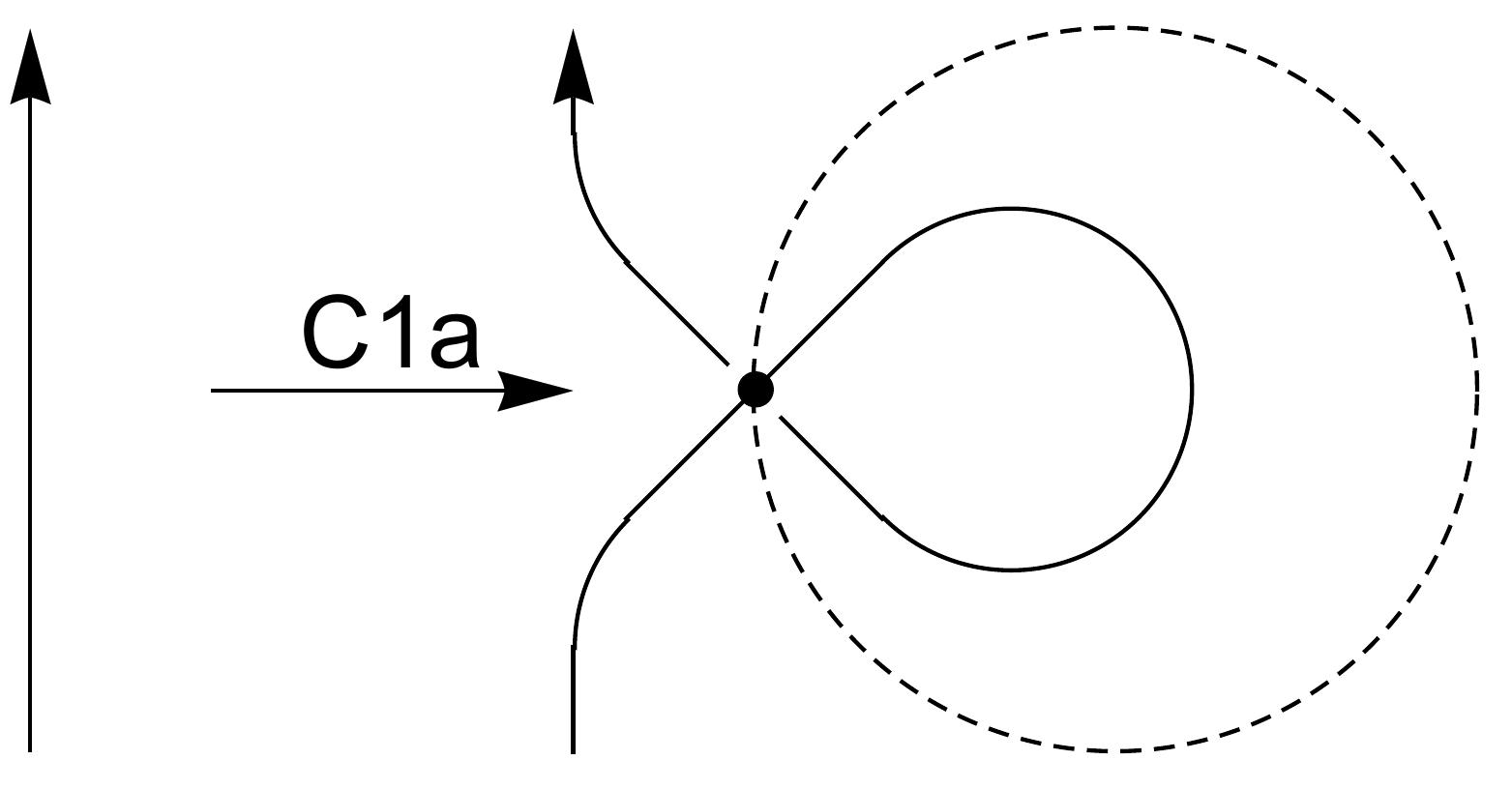}&\quad\quad\quad&
\includegraphics*[scale=.3]{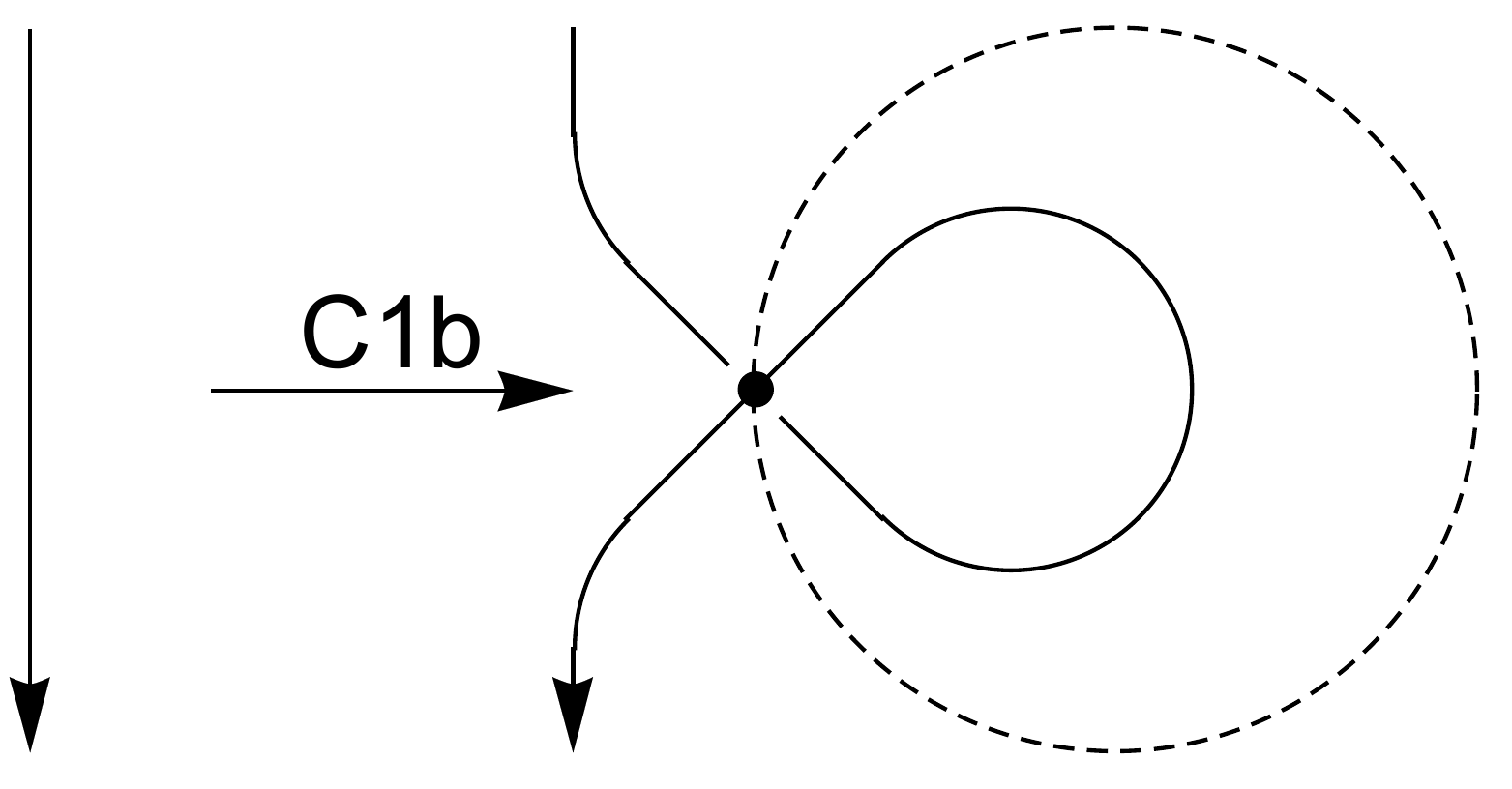}
\end{tabular}\\
\begin{tabular}{ccc}
\includegraphics*[scale=.3]{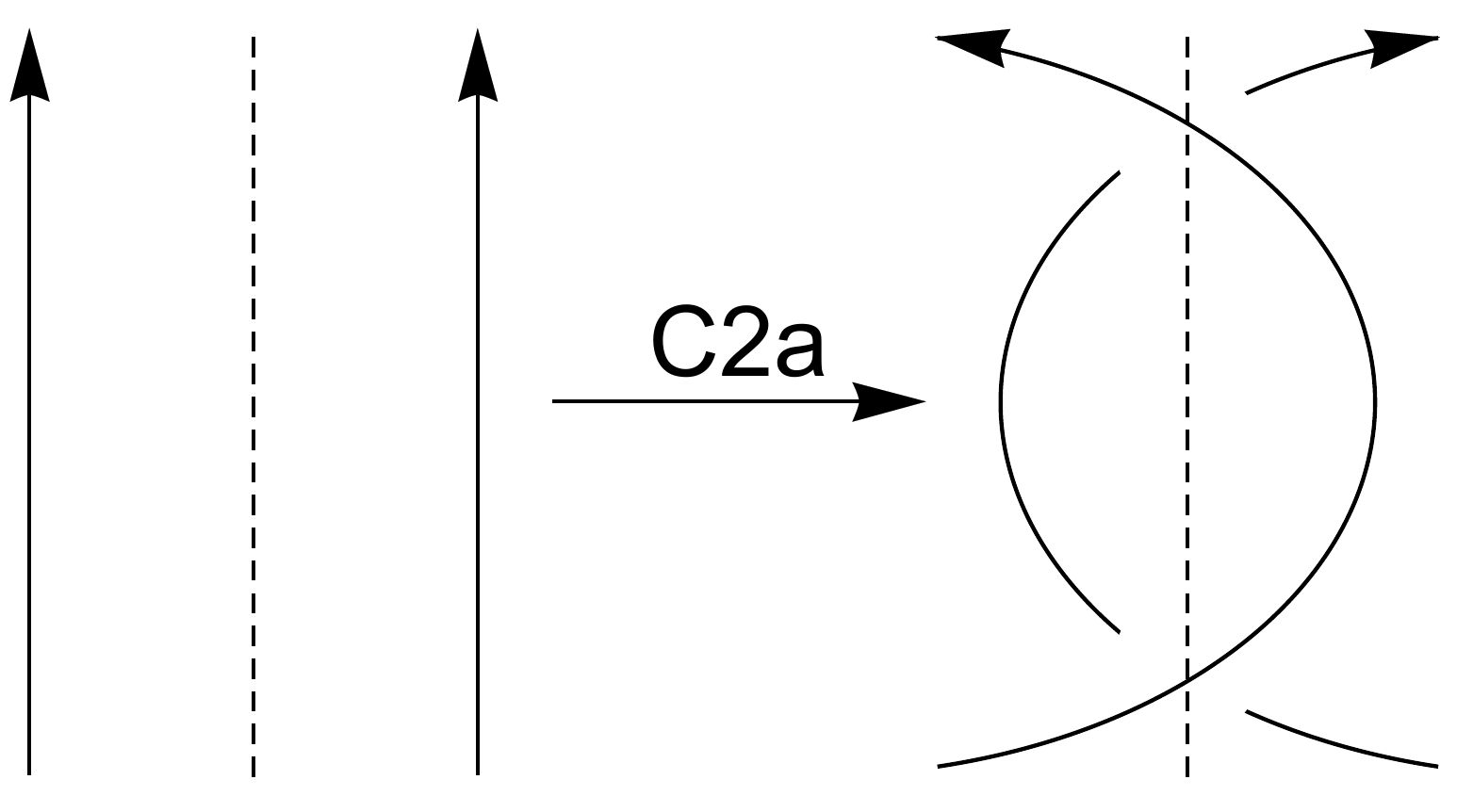}&\quad\quad\quad&
\includegraphics*[scale=.4]{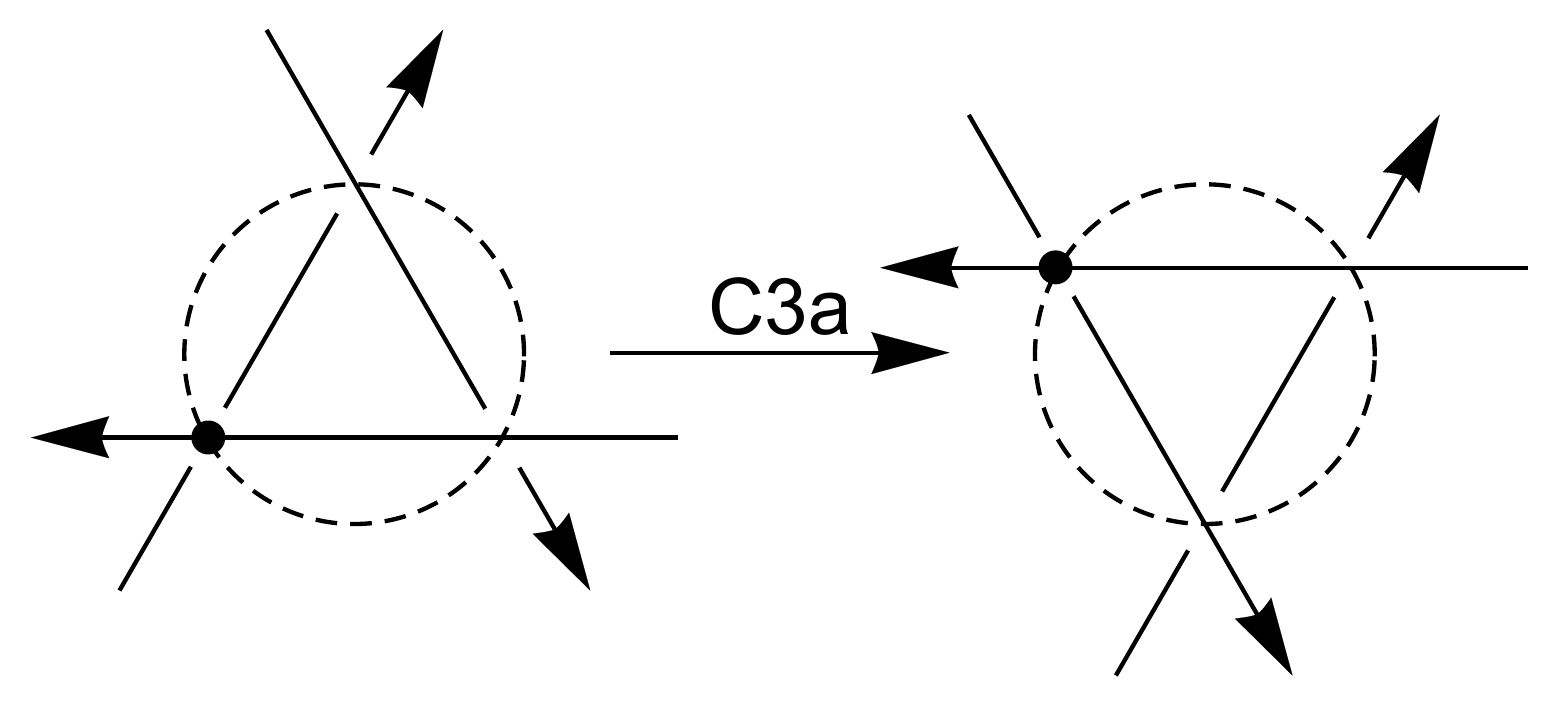}
\end{tabular}\end{tabular}
\end{center}
\caption{Reidemeister moves on oriented diagrams with compatible $CCC$'s. Move $C2a$ is shown in the case where the crossing circle on the left is nontrivial. The case with trivial crossing circle, if shown, would include a basepoint on one of the two crossings introduced by the move.    \hfill}
\label{CCC R moves}
\end{figure}

\begin{proof} 
We first show that if two diagrams are related by any one of the five kinds of moves, then the oriented links that they define  are the same up to orientation reversal of maximal nonsplit sublinks. In the case of the first move, notice that reversing the orientation on one subdiagram of a diagram reverses the orientation of a union of maximal nonsplit sublinks. Notice that the even state induced by the orientation is unchanged, so the $CCC$ remains compatible. Each of the other moves preserves the oriented link exactly.

Now suppose that $D_1$ and $D_2$ are oriented diagrams that define oriented links $L_1$ and $L_2$, respectively, that differ only by reversing maximal nonsplit sublinks. In general, if $L$ is an oriented link, let $|L|$ be the unoriented link obtained from $L$ by forgetting the orientation. Thus $|L_1|=|L_2|$. Let $D$ be some unoriented diagram of $|L_1|$ such that every subdiagram of $D$ defines a maximal non-split sublink of $|L_1|$. We may now orient $D$ somehow to obtain an oriented diagram $\overline D_1$ representing $L_1$ and perhaps some other way to obtain an oriented diagram $\overline D_2$ representing $L_2$. Notice that $\overline D_1$ and $\overline D_2$ differ by at most complete reversal on subdiagrams. Because $D_1$ and $\overline D_1$ are two oriented diagrams of the same oriented link $L_1$, there exists a sequence of $\Omega 1a, \Omega 1b, \Omega 2a$, and $\Omega 3a$ moves and their inverses that take $D_1$ to $\overline D_1$. The same is true for $D_2$ and $\overline D_2$.  Our goal, for both $i=1$ and $i=2$, is to replace these  oriented Reidemeister moves with the corresponding moves of Figure~\ref{CCC R moves}, thus carrying the $CCC$ from $D_i$ to one which is compatible with $\overline D_i$. However, we will see that to do this may also require moves (1), (2), (4), or (5) of the theorem. Once we show how to do this, we can then apply move (1) to $\overline D_1$ to obtain a $CCC$ on $\overline D_2$. We now have two $CCC$'s on $\overline D_2$: the one coming from $D_1$ via $\overline D_1$ and the one coming from $D_2$. But both are compatible with the same oriented diagram and hence, by the remark following Lemma~\ref{CCC's with the same even state},  must be related by moves (2), (4), or (5). Thus, we will finally obtain a path from $D_1$ to $D_2$ using the moves given in the theorem.

We now return to the problem of ``extending'' the sequence of oriented Reidemeister moves from $D_1$ to $\overline D_1$.
It suffices to consider the case where $d$ is an oriented diagram with compatible $CCC$ and we wish to perform a single Reidemeister move (or its inverse)  from the set  $\{\Omega 1a, \Omega 1b, \Omega 2a, \Omega 3a\}$ to $d$, changing it into $d'$. If the $CCC$ for $d$ does not appear exactly as depicted in Figure~\ref{CCC R moves}, then we may need to first change the $CCC$ by moves (2), (4), or (5).  For example, suppose a crossing $c$ is to be eliminated by the inverse of $\Omega 1a$ or $\Omega 1b$ but the crossing circles do not appear exactly as in $C 1a$ or $C 1b$, respectively. If trivial crossing circles lie within the 1-gon, then we may remove them with move (2). Or,  if the crossing circle that contains $c$ also contains other crossings, then a band move can be used to split off a crossing circle that contains only $c$. On the other hand, if we wish to perform $\Omega 1a$ or $\Omega 1b$, we can immediately apply $C 1a$ or $C1b$; the $CCC$ for $d$ already appears as is needed.

The situation is a bit more subtle when extending an $\Omega 2a$ move to a $C 2a$ move. Let  $\alpha$ and $\beta$ be  the two strands involved in the Reidemeister move. There may be strands of crossing circles that lie ``in between'' $\alpha$ and $\beta$. Using band moves, we can clear out this region, one pair of crossing circle strands at a time, leaving either one or no crossing circles between them. If one crossing circle strand remains then we may perform the $C2a$ move  (and introduce a basepoint at one of the two new crossings if necessary). If no crossing circles lie in between $\alpha$ and $\beta$ then it must be because 
$\alpha$ and $\beta$  lie in different subdiagrams of $D$. For if they are both part of the same subdiagram, let $R$  be the complementary region that lies between   $\alpha$ and $\beta$. Note that $R$ may contain  subdiagrams  in its interior. Let $S$ be the set of all crossings $c$ of the boundary of $R$ such that the crossing circle that contains $c$ locally meets the interior of $R$. As we traverse any  boundary component of $R$, either the one containing both $\alpha$ and $\beta$, or perhaps one belonging to another subdiagram, the orientations on the edges of $\partial R$ reverse each time we pass a crossing in $S$. Hence, each boundary component of $R$ contains an even number of crossings in $S$. Moreover, as we traverse the boundary component of $R$ traveling from $\alpha$ only as far as $\beta$, we must pass an odd number of crossings in $S$. Therefore, any arc from $\alpha$ to $\beta$ lying inside $R$ and intersecting crossing circles transversely must  intersect an odd number of crossing circles. Hence, if no crossing circles lie between $\alpha$ and $\beta$, then they lie in different subdiagrams and we may introduce a trivial crossing circle that separate $\alpha$ and $\beta$.   The $CCC$ now appears as needed to perform the  $C2a$ move. In this case, a basepoint must be added to one of the newly introduced crossings. Finally, if we wish to extend the inverse of $\Omega 2a$ to the inverse of $C2a$, and the crossing circle that contains the two crossings  also contains other crossings, we can first move the basepoint, if necessary, to not lie at either of the two crossings that are to be eliminated. If this is not the case, then the basepoint lies at one of the two crossings and after eliminating the 2-gon, the crossing circle is trivial.

Finally, notice that to extend $\Omega 3a$ (or its inverse)  to $C 3a$ (or its inverse) we can use band moves and deletion of trivial crossing circles, if necessary, to first alter the $CCC$ so that it appears as  shown in Figure~\ref{CCC R moves}.
\end{proof}

Using the fact that every classical diagram has a $CCC$, it is shown in  \cite{Adams5} that every link has a $3$-diagram. The construction is as follows. Given a  link $L$, let $D$ be a $2$-diagram of $L$ equipped with a  $CCC$. For each non-trivial crossing circle $C$, let $c$ be the chosen crossing on $C$. Now replace the over-crossing strand at $c$ with the crossing circle. That is, as we come into the crossing $c$ on the over-crossing strand, instead of continuing on the over-crossing strand, detour  around the crossing circle on top of all the other crossings on $C$ and in such a way as to eliminate the crossing $c$.  All of the other crossings on the crossing circle are now turned into 3-crossings.  If we do this for each crossing circle in the  $CCC$, we obtain a 3-diagram of the same link. Note that trivial crossing circles are ignored or, if one prefers, removed at the beginning of the process. An example is shown in Figure~\ref{CCC construction}. Alternatively, we could replace the under-crossing strand at $c$ by detouring around $C$ {\sl beneath} all the other crossings along $C$. We call the former the {\it over CCC} construction, and the latter the {\it under CCC} construction. If the $CCC$ contains  more than one circle, we could even mix the two constructions. Notice that in any case,  one 2-crossing has been lost for each crossing circle. This implies that  the 3-crossing number of a link is strictly less than the 2-crossing number. Let $\Psi$ be the map from the set of 2-diagrams equipped with  $CCC$'s into the set of 3-diagrams which is defined by the over $CCC$ construction.

If $D$ is an oriented 2-diagram with compatible $CCC$, then the over $CCC$ construction produces an oriented 3-diagram.  By abuse of notation, we will still use $\Psi$ to refer to this construction in the oriented case. 

The following lemma seems worth recording, but is not needed in our exposition.
\begin{lemma} If $D$ is a connected   oriented 2-diagram with compatible $CCC$, then the orientation on $\Psi(D)$ is  natural. 
\end{lemma}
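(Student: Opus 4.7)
By Lemma~\ref{natural iff Type A}, it suffices to verify that every $3$-crossing of $\Psi(D)$ is of Type A: its six strand-endpoints alternate in/out around the crossing.

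The plan is to analyze a fixed $3$-crossing $c'$ of $\Psi(D)$ locally. Such $c'$ arises from a non-basepoint crossing on some nontrivial crossing circle $C$ in the CCC; its three strands are the two original strands of $D$ at $c'$ (inheriting their $D$-orientations, becoming the middle and bottom strands) plus the detour strand following $C$ on top. The two original strands contribute four endpoints whose in/out pattern around $c'$ is of the (non-alternating) form in-in-out-out, since each strand has exactly one ``in'' and one ``out'' at opposite corners. This singles out two opposite ``source'' and ``sink'' regions at $c'$ (those with both corners ``in'' and both ``out'' respectively). These are precisely the dotted regions of the orientation-induced state at $c'$, so by compatibility of the CCC, $C$ passes through them, and thus the two detour endpoints lie in the source and sink regions. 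A direct check of the six-endpoint pattern shows it is alternating (Type A) if and only if the detour is oriented at $c'$ from the sink region to the source region.

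Next I would establish this sink-to-source condition globally. At the basepoint $c$, the input end of the over-strand lies at a corner bordering the source of $c$ (since the arrow there is ``in'') and the output end at a corner bordering the sink; so by construction the detour leaves $c$ by curving into the source region to meet the nearby $C$-branch, follows $C$ around, and returns to $c$ through the sink-region branch. To propagate this to every non-basepoint $c'$ on $C$, I would use that each arc of $C$ between consecutive crossings on $C$ is a connected curve disjoint from $D$ and so lies in a single complementary region $R$ of $D$; at each of its two endpoint-crossings, $R$ is adjacent via the corresponding $C$-branch, hence is either the source or the sink there. If one can show that $R$ is the source at one endpoint and the sink at the other, then the detour enters each subsequent crossing via the sink-branch and exits via the source-branch, preserving the sink-to-source orientation along all of $C$.

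The main obstacle is this last sub-claim: that a region $R$ of $D$ bordering an arc of $C$ alternates between source and sink at the two endpoints of that arc. The even-state condition (Theorem~\ref{even states}(\ref{orientation induced states are even})) gives only that $R$ has an even total number of dots, which a priori allows any assignment of sources and sinks on $\partial R$. I would try to close the gap by walking along $\partial R$ in one direction and tracking how the orientations of the boundary edges of $R$ transition at each vertex: at a through-vertex both bounding edges of $R$ carry the same rotational sense relative to the walk, whereas at a source- or sink-vertex the rotational sense flips. Combining this with connectedness of $D$ and the compatibility of $C$ with the orientation-induced state should force the required alternation of source and sink along successive crossings on $C$, completing the Type A verification at every $3$-crossing of $\Psi(D)$.
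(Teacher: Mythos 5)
Your overall strategy is the same as the paper's: reduce to showing that every $3$-crossing of $\Psi(D)$ is Type A, observe that compatibility of the $CCC$ places the two ends of the detour strand in the two ``source/sink'' (dotted) corners at each crossing so that exactly one orientation of the detour makes that crossing Type A, and then propagate the correct orientation along each crossing circle $C$ using the complementary region $R$ of $D$ that contains an arc of $C$ between consecutive crossings. Your local analysis (the in--in--out--out pattern of the four original strand-ends, and the identification of the dotted corners with the source and sink) is correct and in fact more detailed than what the paper writes down.

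However, the step you flag as ``the main obstacle'' is a genuine gap, and the device you propose for closing it does not suffice as stated. Walking along $\partial R$ and recording that the rotational sense of the boundary edges flips exactly at the source/sink (i.e.\ dotted) corners reproduces only the statement that $R$ contains an even number of dots in total --- which, as you yourself note, is compatible with any distribution of sources and sinks around $\partial R$ and so cannot by itself force the alternation you need. The missing idea is a pairing argument: since $D$ is connected, $\overline R$ is a disk with a single boundary circle, and the dotted corners of $R$ are precisely the endpoints of the disjoint arcs of crossing circles properly embedded in $\overline R$. The arc of $C$ joining the consecutive crossings $c_1$ and $c_2$ separates $\overline R$, and every other crossing-circle arc in $R$ is disjoint from it, hence has both of its endpoints on the same side of $\{c_1,c_2\}$ on $\partial R$. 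Therefore the number of sense reversals \emph{strictly between} $c_1$ and $c_2$ along $\partial R$ is even, and this evenness is exactly what converts ``both edges of $R$ point into $c_1$'' into ``both edges of $R$ point out of $c_2$,'' i.e.\ the source-at-one-end, sink-at-the-other alternation. This is the content of the paper's sentence asserting ``an even number of reversals between $c_1$ and $c_2$''; with that one observation added, your argument closes and agrees with the paper's proof.
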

\begin{proof} Suppose  that $D$ is a connected oriented 2-diagram with compatible $CCC$  and suppose $C$ is a crossing circle for $D$. Consider two  consecutive crossings $c_1$ and $c_2$ as one traverses $C$. Thinking of these as 3-crossings (two strands from $D$ together with $C$), then
if we orient $C$ so that $c_1$  is Type A, then $c_2$  must also be of Type A. To see this,   let $R$ be the complementary region  of $D$ containing the arc of $C$ connecting $c_1$ and $c_2$. As we traverse the boundary of $R$ from $c_1$ to $c_2$, which is possible because $D$ is connected,  the orientation of $\partial R$ reverses each time we pass the endpoint of a crossing circle inside $R$. Thus there are an even number of reversals between $c_1$ and $c_2$ and hence $c_2$ is also of Type A. Thus if one crossing on $C$ is Type A, all the crossings on $C$ are Type A. Now when we perform the over $CCC$ construction, the orientation induced on $C$ by detouring around $C$ instead of following the over-crossing strand at the basepoint, is the one that makes all the crossings along $C$ of Type A.
\end{proof}

\begin{figure}[htbp]
\vspace*{13pt}
\centerline{\includegraphics*[scale=.3]{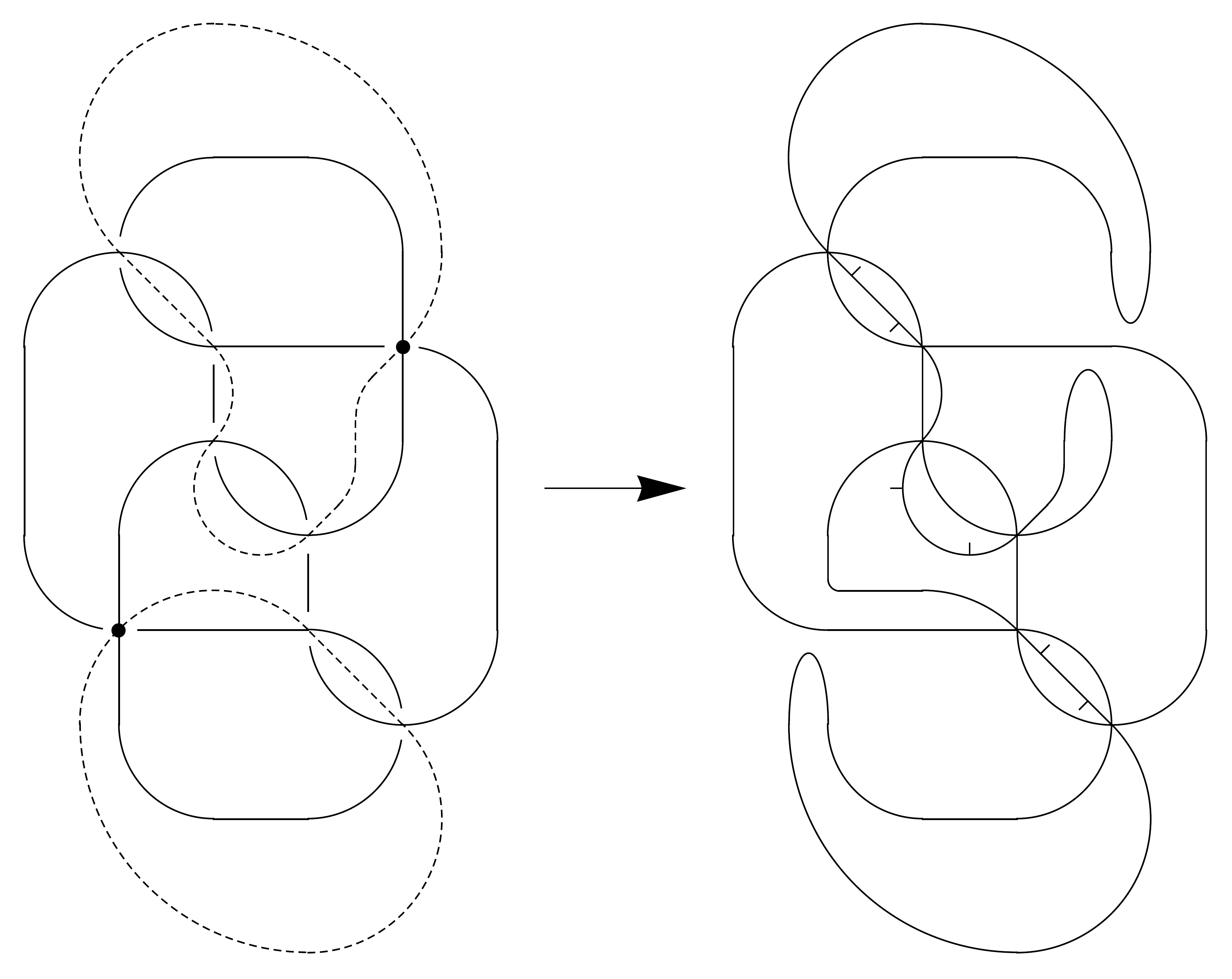}}
\caption{Using a CCC to turn a 2-diagram of a knot  into a 3-diagram. Marks at each 3-crossing indicate the highest strand and point to the middle-level strand.}
\label{CCC construction}
\end{figure}

\section{Proof of Theorem~\ref{3-diagram moves}}\label{proof of main theorem}

\begin{lemma}\label{1-move to image of Psi} Let $D$ be any 3-diagram with a piecewise natural orientation. Then there exists an oriented 2-diagram $d$ with compatible $CCC$ such that $D$ and $\Psi(d)$ differ by 1-moves. 
\end{lemma}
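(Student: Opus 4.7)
The plan is to modify $D$ by a finite sequence of 1-moves to reach a 3-diagram $D'$ lying in the image of $\Psi$, and then take $d = \Psi^{-1}(D')$.

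First I would make the preliminary observation that a 3-diagram $D^{*}$ equals $\Psi(d)$ for some compatible $d$ if and only if its 3-crossings can be organized onto a family of pairwise disjoint embedded simple closed curves $C_1,\dots,C_k$ in the projection 2-sphere, each a union of arcs of the link, with every 3-crossing of $D^{*}$ lying on exactly one $C_i$ as the over-strand there (these $C_i$ are precisely the images of the crossing circles of $d$ absorbed into the link by $\Psi$). Given such a family, $d$ is recovered by ``splitting off'' the $C_i$'s: at each 3-crossing on $C_i$ restore the underlying 2-crossing between the former middle and bottom strands, and add one extra basepoint 2-crossing per curve to form the CCC. The orientation induced on $d$ is automatically compatible with this CCC because every 3-crossing of $D^{*}$ is Type A.

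Next I would attempt to extract such a family directly from $D$. By Lemma~\ref{natural iff Type A}, every 3-crossing of $D$ is Type A, so the over-strand carries a coherent orientation along its continuations. At each 3-crossing $x$ I would follow each of the two over-rays along the link to the next 3-crossing; call $x$ \emph{good} if both over-rays reach the next 3-crossing still as over-rays (or close into a simple loop through arcs with no intervening crossings). If every 3-crossing of $D$ is good, then chaining over-strands through good 3-crossings produces a family of pairwise disjoint embedded over-strand loops covering all 3-crossings, and $D$ is already in the image of $\Psi$. Otherwise, some over-ray at a bad 3-crossing $x$ reaches the next 3-crossing $y$ as a middle or bottom ray, and I would resolve this obstruction by a 1-move near $x$: apply a Reidemeister~I twist to the over-strand of $x$ to create a 1-gon, and slide it over an adjacent strand to produce a new 3-crossing $x^{*}$, positioned so that the pair $\{x,x^{*}\}$ lies on a small embedded over-strand loop (or, dually, so that the offending over-ray is absorbed into a small 1-gon loop before it can reach $y$). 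Performing this 1-move at every obstruction yields the desired $D'$, from which the required $d$ is extracted as in the first paragraph.

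The main obstacle will be the last step: showing that the local 1-move can always be positioned so that the new embedded loop closes up properly without self-intersecting or creating a fresh obstruction at a neighbouring 3-crossing. This requires a careful case analysis based on the local configuration at $x$, using the rigid Type~A structure forced by the natural orientation to constrain the local behaviour of the over-strand; one must also verify that the procedure terminates and that the piecewise natural orientation is preserved at every intermediate stage, which follows from the fact that each 1-move introduces only a Type~A 3-crossing.
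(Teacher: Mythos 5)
There is a genuine gap, and it sits exactly where you flag it: your argument depends on (a) the claim that if every 3-crossing is ``good'' then chaining over-strands yields a family of \emph{pairwise disjoint embedded} loops, and (b) a repair procedure at bad crossings that closes up properly, terminates, and does not create fresh obstructions. Neither is established. Even in the all-good case the over-strand chains need not be embedded or disjoint: a chain can return to a 3-crossing it has already visited as the middle or bottom strand there, and two distinct chains can meet at a common 3-crossing; ruling this out (or repairing it) is precisely the global combinatorial problem your proposal defers. As written, the ``careful case analysis'' you postpone is the entire content of the lemma.

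The intended proof avoids all of this by never attempting a global chaining. Resolve \emph{every} 3-crossing of $D$ into a small triangle of three 2-crossings to obtain an oriented 2-diagram $d$ of the same oriented link, and place one tiny crossing circle around each such triangle, passing through its three crossings. Since the orientation of $D$ is piecewise natural, Lemma~\ref{natural iff Type A} says every 3-crossing is Type~A, and this is exactly what makes the orientation-induced state markers at the three resolved crossings select the small triangular region and its opposite, so each little crossing circle is compatible. Applying $\Psi$ to $d$ absorbs each crossing circle as a short over-strand detour through the remaining two crossings of its triangle, producing a pair of 3-crossings that differs from the original single 3-crossing of $D$ by exactly one 1-move (Figure~\ref{1-move to get into image of Psi}). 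So the correct move is the one you describe locally for bad crossings --- create a small over-strand loop by a 1-move --- but performed uniformly at every 3-crossing, which makes the disjointness, embeddedness and termination issues disappear.
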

\begin{proof}  Let $D$ be any 3-diagram of a link with a piecewise natural orientation. We may resolve each 3-crossing of $D$ into three 2-crossings to obtain an oriented 2-diagram $d$ of the same oriented link.  Because of Lemma~\ref{natural iff Type A},  a compatible $CCC$ for $d$ exists with one crossing circle for each such set of three crossings, as shown  in Figure~\ref{1-move to get into image of Psi}. The diagram $\Psi(d)$ and $D$ are now related by a 1-move near each 3-crossing of $D$ as shown in  the figure. (A similar figure can be used in the case where the heights of the three strands in $D$ are different than pictured.)
\end{proof}

\begin{figure}[h]
\vspace*{13pt}
\centerline{\includegraphics*[scale=.4]{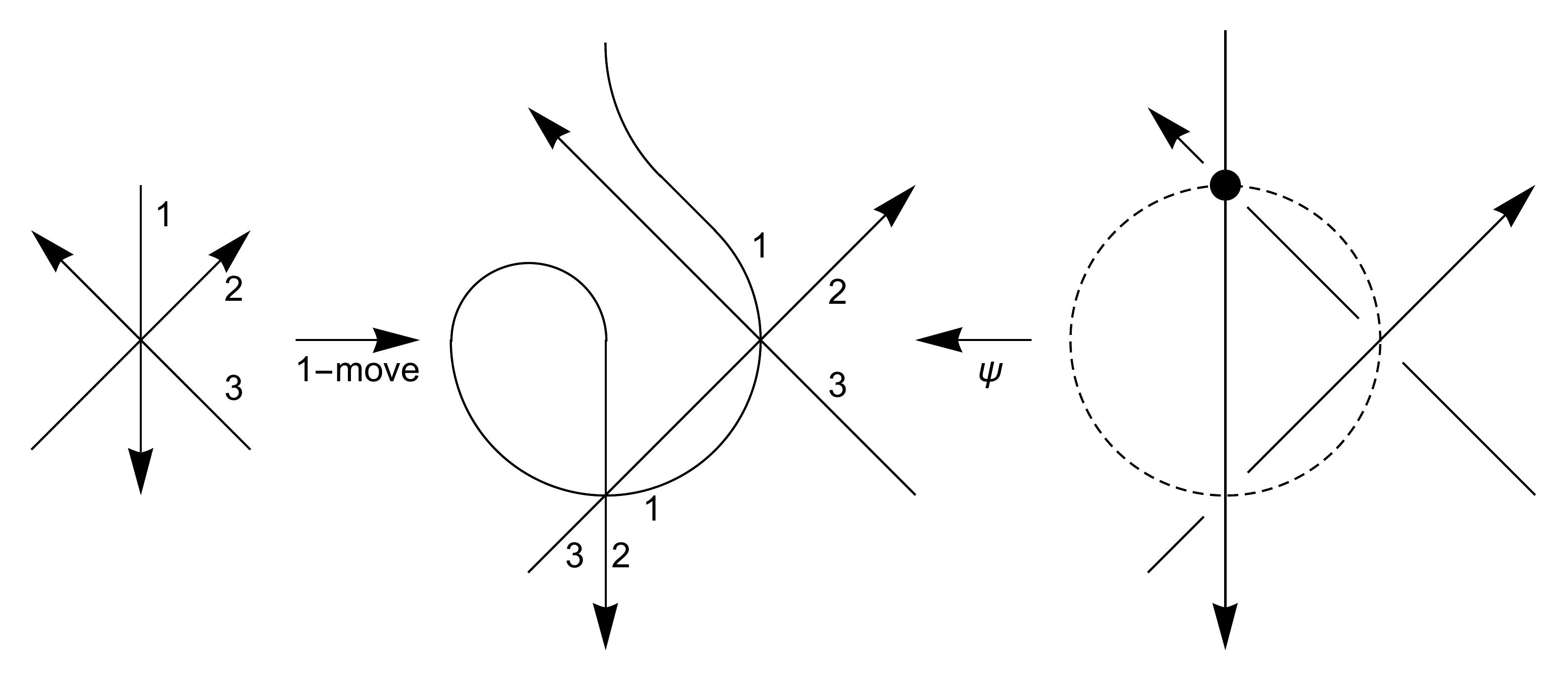}}
\caption{Changing the 3-diagram $D$ on the left, with piecewise natural orientation, into a 3-diagram in the image of $\Psi$.}
\label{1-move to get into image of Psi}
\end{figure}

\noindent{\it Proof of Theorem~\ref{3-diagram moves} and Corollaries~\ref{3-diagram moves on nonsplit links} and \ref{3-diagram moves on knots}:} Suppose first that $D_1$ and $D_2$ are two 3-diagrams related by a sequence of the moves given in Theorem~\ref{3-diagram moves}. It is not hard to see that the 1-move, 2-move, basepoint move and band move may be extended to 3-diagrams with checkerboard colorings. Thus 3-diagrams related by these moves have natural orientations that are either the same or differ by complete reversal. However, if $D_1$ is not connected and the trivial pass move is used to move a strand to the other side of a subdiagram, natural orientations of $D_1$ and $D_2$ may now differ by the reversal of a union of maximal nonsplit sublinks of $D_1$. This completes the proof of one direction of the theorem as well as one direction of both Corollaries~\ref{3-diagram moves on nonsplit links} and \ref{3-diagram moves on knots}.

Conversely, suppose that $D_1$ and $D_2$ are two 3-diagrams each having a  natural orientation defining oriented links $L_1$ and $L_2$, which are the same, up to reversal of  maximal nonsplit sublinks. By Lemma~\ref{1-move to image of Psi}, we may use 1-moves to change each 3-diagram $D_i$ into a 3-diagram $D_i'=\Psi(d_i)$ where $d_1$ and $d_2$ are  oriented 2-diagrams of $L_1$ and $L_2$, respectively, each with compatible $CCC$.   We may now change $d_1$ into $d_2$ by a sequence of the moves given in Theorem~\ref{CCC moves}. Suppose this gives the sequence $d_1=e_1, e_2, \dots, e_k=d_2$ where each $e_i$ is an oriented 2-diagram equipped with a compatible $CCC$. It remains only to show that if $e_i$ is taken to $e_{i+1}$ by one of the moves given in Theorem~\ref{CCC moves}, then $\Psi(e_i)$ can be taken to $\Psi(e_{i+1})$ by a sequence of the 3-diagram moves given in Theorem~\ref{3-diagram moves}. We will consider each type of possible move.

 If  $e_i$ and $e_{i+1}$ are related by either the insertion or deletion of a trivial crossing circle, or $C 1a$ or $C 1b$, then  $\Psi(e_i)=\Psi(e_{i+1})$. Now suppose $e_i$ and $e_{i+1}$ are related by $C 2a$ . If the crossing circle lying between the two parallel arcs on the left is trivial, then a basepoint must be located on one of the two crossings on the right. It now follows that $\Psi(e_i)$ and $\Psi(e_{i+1})$ are related by a 1-move and a trivial pass move. If instead, the crossing circle is not trivial, then $\Psi(e_i)$ and $\Psi(e_{i+1})$ are related by a 2-move. Now suppose $e_i$ and $e_{i+1}$ are related by $C 3a$, then $\Psi(e_i)$ and $\Psi(e_{i+1})$ are related by a sequence of two 1-moves.

Next, if $e_i$ and $e_{i+1}$ are related by a $CCC$ change of basepoint, then the reader can verify that   $\Psi(e_i)$ and $\Psi(e_{i+1})$ are related by a 3-diagram basepoint move. Finally, suppose $e_i$ and $e_{i+1}$ are related by a $CCC$  band move. Now three cases can occur.  Without loss of generality, we may assume that one crossing circle is being split into two crossing circles by the move. If a trivial crossing circle is split into two trivial crossing circles, then $\Psi(e_i)$ and $\Psi(e_{i+1})$ are equal. If a non-trivial crossing circle is split into a nontrivial crossing circle and a trivial crossing circle, then $\Psi(e_i)$ and $\Psi(e_{i+1})$ are either equal or related by a trivial pass move. Finally, if a nontrivial crossing circle is split into two nontrivial crossing circles, then $\Psi(e_i)$ and $\Psi(e_{i+1})$ are related by a band move. 

Notice that the only time the trivial pass move is needed in the proof  is when one of $e_i$ or $e_{i+1}$ is a disconnected diagram. Hence, if we assume from the beginning that $D_1$ and $D_2$ are two 3-diagrams each defining nonsplit links, then this cannot occur. Thus we obtain Corollaries~\ref{3-diagram moves on nonsplit links} and \ref{3-diagram moves on knots}. \hfill $\square$

\section{Triple and Quintuple Crossing Numbers}\label{c_3 and c_5}

In any link diagram with multicrossings, notice that an $n$-crossing can always be increased to an $(n+2)$-crossing. After passing though the crossing on any of the $n$ strands, one can double back and pass through the crossing two more times before continuing on as before. This process is illustrated in Figure~\ref{threetofive}, where a 3-crossing is increased to a 5-crossing. Because every link has  both a 2-diagram as well as a 3-diagram, it follows that every link has an $n$-diagram for every $n\ge 2$. This allows us to define the $n$-crossing number of a link, denoted $c_n(L)$, as the minimum number of $n$-crossings in any $n$-diagram of $L$.

\begin{figure}[htbp]
\vspace*{13pt}
\centerline{\includegraphics*[scale=.6]{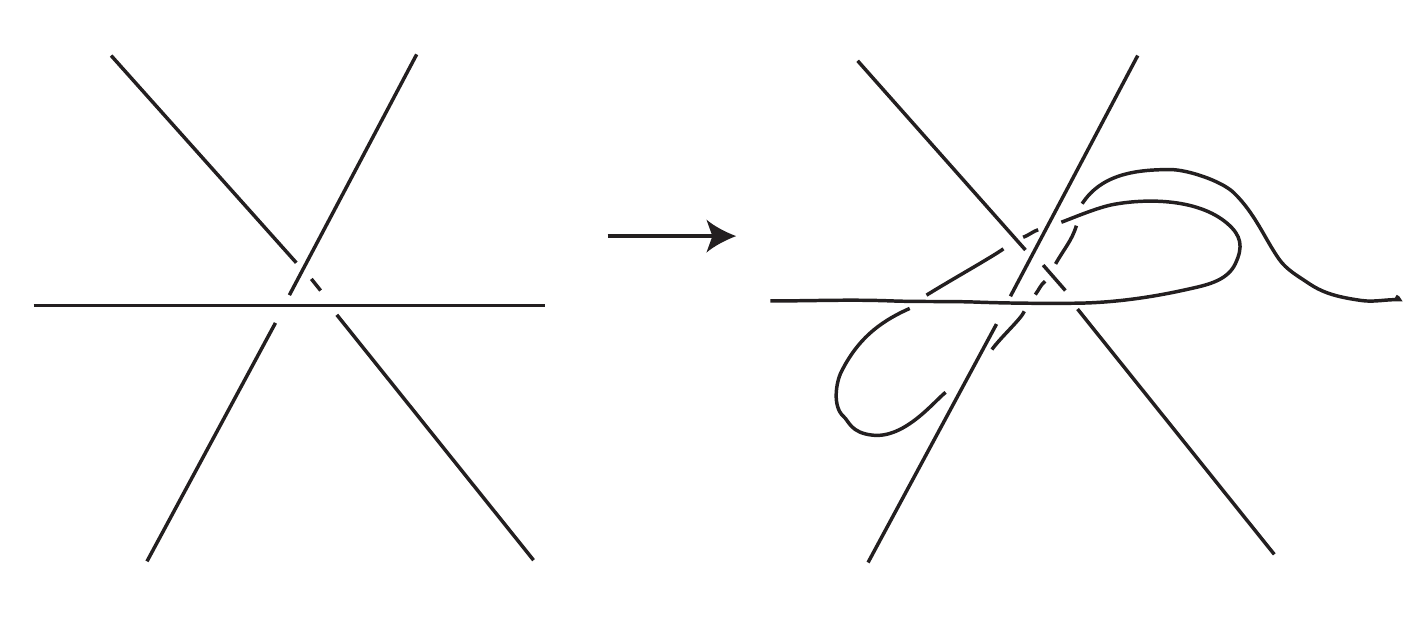}}
\caption{Turning a 3-crossing into a 5-crossing.}
\label{threetofive}
\end{figure}

The process just described of turning any $n$-crossing into an $(n+2)$-crossing immediately proves that, for any link $L$, 
\begin{align*}
c_2(L)&\ge c_4(L) \ge c_6(L)\ge \dots\\
c_3(L)&\ge c_5(L)\ge c_7(L)\ge \dots\\
\end{align*}
 
 However, the $CCC$ construction shows that  $c_2(L)>c_3(L)$ for every link $L$ because a set of $k$ 2-crossings lying on a crossing circle can be converted to a set of $k-1$ 3-crossings.
In this section, we prove that for any nontrivial nonsplit link $L$, other than the Hopf link, $c_3(L) > c_5(L)$. We begin with the following lemma.


\begin{lemma} \label{yesmonogons} If $D$ is a  connected 3-diagram with at least two 3-crossings and with at least one monogon among its complementary faces, then $D$ can be converted into a 5-diagram of the same link with fewer crossings.
\end{lemma}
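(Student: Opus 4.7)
The strategy is to construct a 3-diagram of the same link with $n-1$ 3-crossings, and then apply the process of Figure~\ref{threetofive} to each remaining 3-crossing to obtain a 5-diagram with $n-1$ 5-crossings. Since that process preserves the number of crossings, the resulting 5-diagram has strictly fewer crossings than the original 3-diagram.

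To reduce the number of 3-crossings, work locally at $c_1$. Let $\alpha$ and $\beta$ denote the two strands at $c_1$ whose half-strands bound the monogon, and let $\gamma$ denote the third strand. Slightly perturb the three strands at $c_1$ into three pairwise 2-crossings. By choosing the perturbation appropriately, the 2-crossing between $\alpha$ and $\beta$ becomes a Type I Reidemeister kink, bounded by the original monogon. Apply a Type I Reidemeister move to remove it, leaving just two 2-crossings between $\gamma$ and the merged $\alpha\beta$ arc.

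If $\gamma$ is either the topmost or bottommost of the three strands at $c_1$, then $\gamma$ passes uniformly over (or uniformly under) the merged $\alpha\beta$ arc at both remaining 2-crossings, so they form a bigon removable by a Type II Reidemeister move. This completely clears the neighborhood of $c_1$, yielding a 3-diagram of the same link with $n-1$ 3-crossings, as required.

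If instead $\gamma$ is the middle-height strand at $c_1$, the two remaining crossings alternate in over/under pattern and form a clasp that cannot be removed by a Type II Reidemeister move alone. Here the hypothesis that $D$ contains a second 3-crossing $c_2$ becomes essential: the clasp must be transported elsewhere in the diagram and absorbed into $c_2$. One plan is to use band moves (and possibly basepoint moves) from Theorem~\ref{3-diagram moves} to slide the clasp along the diagram until it lies next to $c_2$, and then to convert $c_2$ into a 5-crossing in such a way that the clasp is incorporated into it. The main obstacle is to carry this out carefully, ensuring the link type is preserved and no new crossings are introduced; once accomplished, the diagram contains $n-2$ 3-crossings and one 5-crossing at $c_2$, and applying the process of Figure~\ref{threetofive} to the remaining 3-crossings then yields a 5-diagram with $n-1$ total crossings.
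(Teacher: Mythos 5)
Your handling of the case where $\gamma$ (the strand not bounding the monogon) is the top or bottom strand at $c_1$ is correct, and it is in fact simpler than what the paper does: resolving the triple point, killing the monogon kink by a Reidemeister I move, and cancelling the two remaining double points by a Reidemeister II move (the bigon they cobound is essentially the old monogon face, hence empty) deletes $c_1$ outright and leaves an honest 3-diagram with $n-1$ triple crossings. The problem is the middle-strand case, which you have only sketched, and the sketch does not go through as written. First, the band and basepoint moves of Theorem~\ref{3-diagram moves} are moves on 3-diagrams; after your Reidemeister I move you are holding a mixed diagram containing a 2-crossing clasp, so those moves are not available to ``transport'' anything. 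Second, dragging a clasp along a strand past intervening 3-crossings creates new double points (or promotes 3-crossings to 4-crossings), so it is not clear the crossing count ever comes out right; and absorbing the clasp into $c_2$ as part of a single 5-crossing is obstructed by the fact that the two clasp strands meet each other \emph{twice}, whereas in a 5-crossing each pair of strands meets exactly once. Third, your plan presumes a crossing $c_2$ to which the clasp can be slid, but the middle-strand case contains the degenerate configuration of Figure~\ref{Hopf Link summand}, in which the monogon sits inside a loop forming a Hopf link summand, and that configuration needs separate treatment which your outline does not provide.

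The paper avoids all of this by never destroying the 3-crossing carrying the monogon: it uses the monogon as room to pull two strands of an adjacent 3-crossing $A$ into the monogon crossing $B$ --- one laid over the whole diagram and one pushed underneath (or both over, or both under, depending on the heights at $A$) --- producing a single 5-crossing at $B$ and eliminating $A$, with the Hopf-summand configuration handled as a separate case that reduces back to the first. If you wish to keep your case split by the height of $\gamma$, the cleanest fix is to fall back on that absorption move in the middle-strand case; as it stands, the middle-strand case is a genuine gap in your argument.
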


\begin{proof} Note first that if we can convert any subset of the 3-crossings of $D$ into a smaller set of 5-crossings, then we can  convert each of the remaining 3-crossings into a  5-crossing as in Figure~\ref{threetofive} and hence obtain a 5-diagram of the same link with fewer crossings. Consider a complementary region, or face,  of $D$ that is a monogon and consider the 3-crossing $B$ incident to the monogon. Because $D$ is connected and has at least two 3-crossings, we are led to two possibilities, the first of which is shown in Figure~\ref{monogontrip}.  In this case we can eliminate the 3-crossing at $A$ by producing a 5-crossing at $B$ as shown in the figure. For example, suppose the two strands to be moved are the top and bottom  strands at $A$. We first pick up the top strand and lay it down on top of $B$. We then move what was the bottom strand at $A$ underneath the diagram, becoming the bottom strand at $B$. The other height arrangements at $A$ are handled similarly.

The other possibility leads us to the case shown in Figure~\ref{Hopf Link summand}. If the strand heights at crossing $B$ are not as shown in the figure (that is,  the``vertical" strand is not the middle strand) then the link is split, crossing $B$ can be eliminated, and the 5-crossing diagram  obtained by simply changing every remaining 3-crossing to a 5-crossing has fewer crossings. If, instead, the heights at crossing $B$ are as shown in the figure,  then the link has a Hopf link summand and the diagram can be changed as shown in the figure. But now we are again in the situation of Figure~\ref{monogontrip}. As before, this leads to a  5-diagram of the same link with fewer crossings.
\end{proof}

\begin{figure}[htbp]
\vspace*{13pt}
\centerline{\includegraphics*[scale=.7]{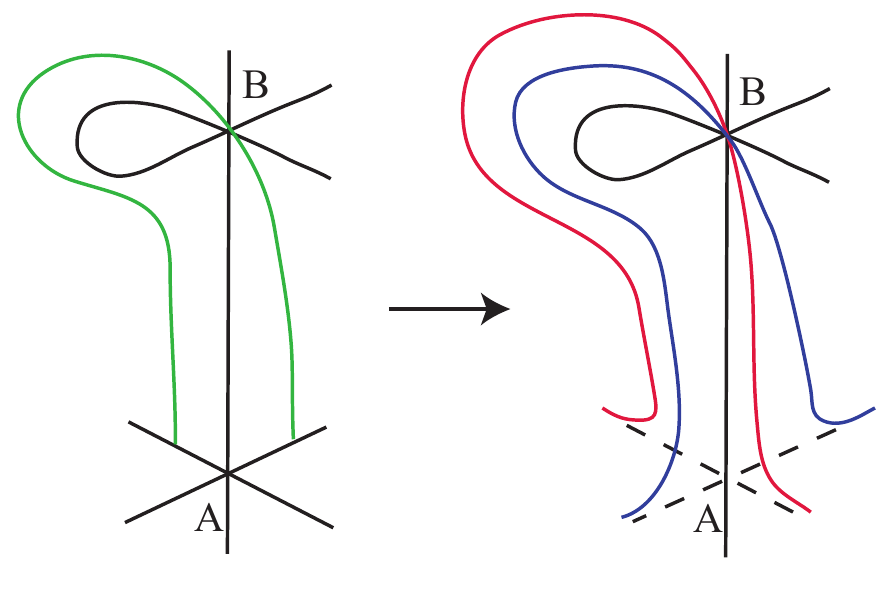}}
\caption{Using a monogon to replace two 3-crossings with one 5-crossing.}
\label{monogontrip}
\end{figure}

\begin{figure}[htbp]
\vspace*{13pt}
\centerline{\includegraphics*[scale=.5]{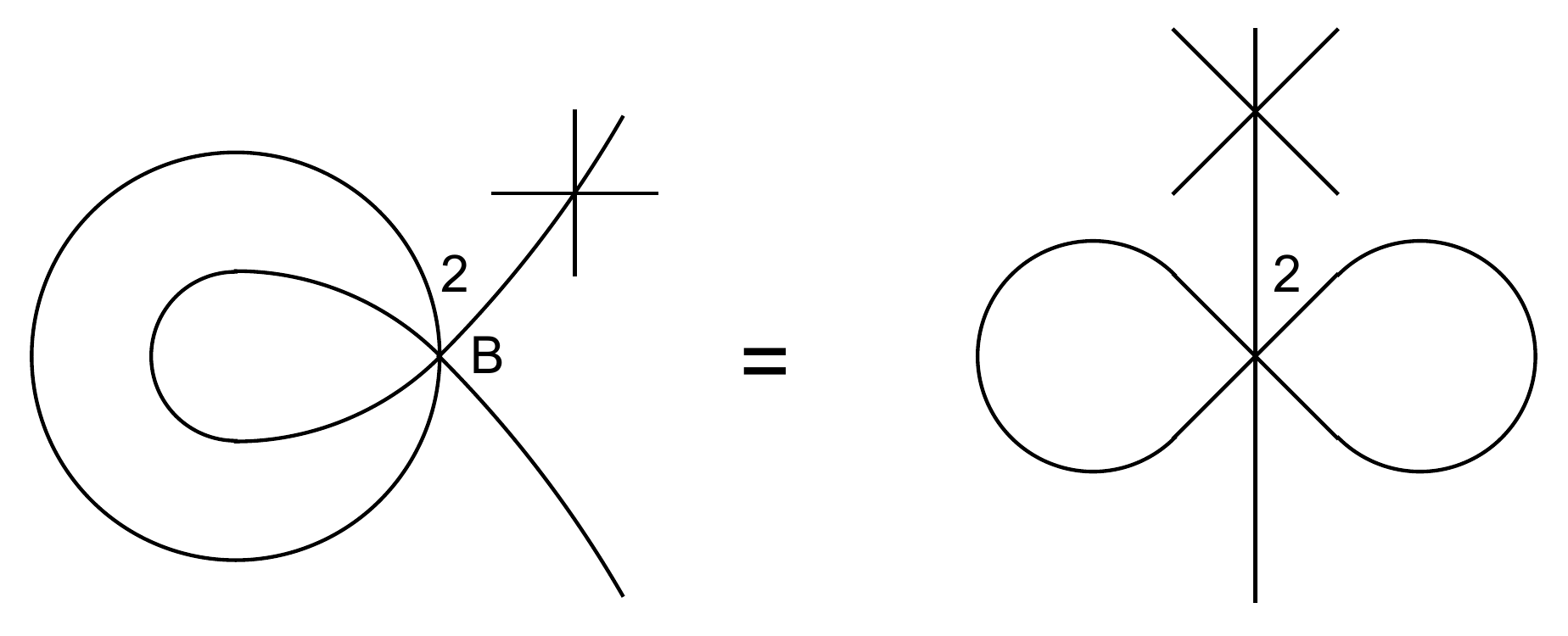}}
\caption{A monogon that gives a Hopf link summand.}
\label{Hopf Link summand}
\end{figure}

We now consider 3-diagrams with no monogons. If a complementary region of the diagram has a certain pattern of adjacent bigons on its boundary, then again, we will be able to find a 5-diagram with fewer crossings.

\begin{lemma} \label{lemma:badcases}Let $D$ be a connected 3-diagram of a link $L$. Suppose there exists a complementary region $F$ that is a polygon with $n>1$ edges, and the following holds:

\begin{enumerate}
\item If $n$ is even, at least every other edge of $F$ is shared with a bigon.
\item  If $n$ is odd, there are at least enough bigons on the boundary of $F$ such that, other than one pair of two adjacent edges, alternate edges are each shared with a bigon. 

\end{enumerate}
Then the 3-diagram can be converted into a 5-diagram with one less crossing than $D$. 
\end{lemma}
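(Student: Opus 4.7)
My plan is to mirror the proof of Lemma~\ref{yesmonogons}: construct an explicit local move near $F$ that replaces a small collection of $3$-crossings by a smaller collection of $5$-crossings with a net saving of exactly one crossing, and then convert each remaining $3$-crossing of $D$ to a $5$-crossing via Figure~\ref{threetofive}. Since the latter conversion preserves crossing count, this produces a $5$-diagram of $L$ with $c(D)-1$ crossings.

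Choose a bigon $B$ attached to an edge $e$ of $\partial F$; in the odd case I pick $e$ to not be one of the two exceptional adjacent edges, so that the two edges of $\partial F$ flanking $e$ are themselves shared with bigons. Let $C_1$ and $C_2$ be the $3$-crossings at the endpoints of $e$. At each $C_i$, two of the three strands bound $B$ (they form the two edges of the bigon, running from $C_1$ to $C_2$), and the third is a ``vertical'' strand that continues along $\partial F$ to the next $3$-crossing. The local move I propose is to lift the outer (non-shared) edge of $B$ across $B$ and $e$, so that after the move the strands at $C_1$ and $C_2$ are reorganized into a single $5$-crossing whose five strands are: the two former bigon-bounding strands (now passing through as one), the two vertical strands, and the lifted outer edge of $B$. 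The flanking bigons serve as corridors through which the vertical strands are temporarily routed by a pair of $2$-moves (Figure~\ref{1and2-move}), so that no extraneous crossings are introduced by the rerouting; in the odd case, a small isotopy near the exceptional pair handles the one edge of $\partial F$ not flanked by a bigon.

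The net effect of the local move is to replace $C_1$ and $C_2$ (together with any transient crossings created and then eliminated by the compensating $2$-moves, which cancel in pairs) by a single $5$-crossing. Converting each remaining $3$-crossing to a $5$-crossing by Figure~\ref{threetofive} leaves the crossing count unchanged on the rest of $D$, so the resulting $5$-diagram has exactly $c(D)-1$ crossings in total.

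The main obstacle will be verifying that the lift-and-merge move is actually realizable in every possible height configuration at $C_1, C_2$ and at the flanking bigon crossings. This requires a careful case analysis in the spirit of the proof of Lemma~\ref{yesmonogons}: for each placement of strand heights one must exhibit the correct sequence of over/under-arc pickups and $2$-moves, making the right choices so that the $5$-crossing that results is genuinely transverse and the overall link type is preserved. The alternating-bigon hypothesis (including the exceptional pair of adjacent edges in the odd case) is precisely the combinatorial condition that guarantees, in every case, that such a sequence exists; verifying this case-by-case is the bulk of the work.
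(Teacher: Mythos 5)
There is a genuine gap, and it is not in the deferred case analysis of strand heights: the core local move you propose is obstructed already at the level of the underlying planar projection. Let $e$ be the chosen edge of $\partial F$ shared with the bigon $B$, with endpoints $C_1$ and $C_2$, and let $c_1$ and $c_2$ be the two ``vertical'' strands at $C_1$ and $C_2$. Take a small disk $\Delta$ containing $C_1$, $C_2$ and $B$. Reading off the cyclic order of arc-ends at a triple point, each of $c_1$ and $c_2$ exits $\Delta$ once on the $B$-side of $e$ and once on the $F$-side, and their four endpoints do \emph{not} separate each other on $\partial\Delta$; hence $c_1$ and $c_2$ must intersect an even number of times inside $\Delta$ (indeed they are disjoint in $D$). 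But in your target configuration the only crossing in $\Delta$ is the single merged $5$-crossing, through which $c_1$ and $c_2$ each pass exactly once, forcing them to cross exactly once there --- an odd number. No sequence of height changes, lifts of over-arcs, or ``temporary'' reroutings through flanking bigons can evade this, since the parity is determined by the boundary data of $\Delta$, which your move does not alter (the vertical strands still continue along $\partial F$ to the neighboring crossings). So the $2\mapsto 1$ merge at a single bigon cannot be realized, and the subsequent bookkeeping collapses.

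This also explains why your argument never uses the full hypothesis: if one bigon on $\partial F$ sufficed, the lemma would not need bigons on essentially every other edge of $F$. The paper's proof is instead a single global move around the entire face: all $n$ of the $3$-crossings on $\partial F$ are converted simultaneously into $n-1$ $5$-crossings (Figures~\ref{badcaseseven} and \ref{badcasesodd}), with each of the hypothesized alternate bigons playing an essential role in absorbing the displaced strands --- the extra strand-passages needed to fatten the surviving crossings to $5$-crossings are distributed around $\partial F$, which is exactly what resolves the parity problem above. The net saving is still one crossing, and the remaining $3$-crossings are then converted individually as in Figure~\ref{threetofive}, matching the count you computed; but the local move producing that saving must be the global one around $F$, not a merge at a single bigon.
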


\begin{proof} See Figures \ref{badcaseseven} and \ref{badcasesodd} to see how we can convert the $n$ 3-crossings around the boundary of $F$ into $n-1$ 5-crossings by a process similar to that given in the proof of Lemma~\ref{yesmonogons}. Each of the remaining 3-crossings can then individually be turned into a 5-crossing.
\end{proof}

\begin{figure}[htbp]
\vspace*{13pt}
\centerline{\includegraphics*[scale=.6]{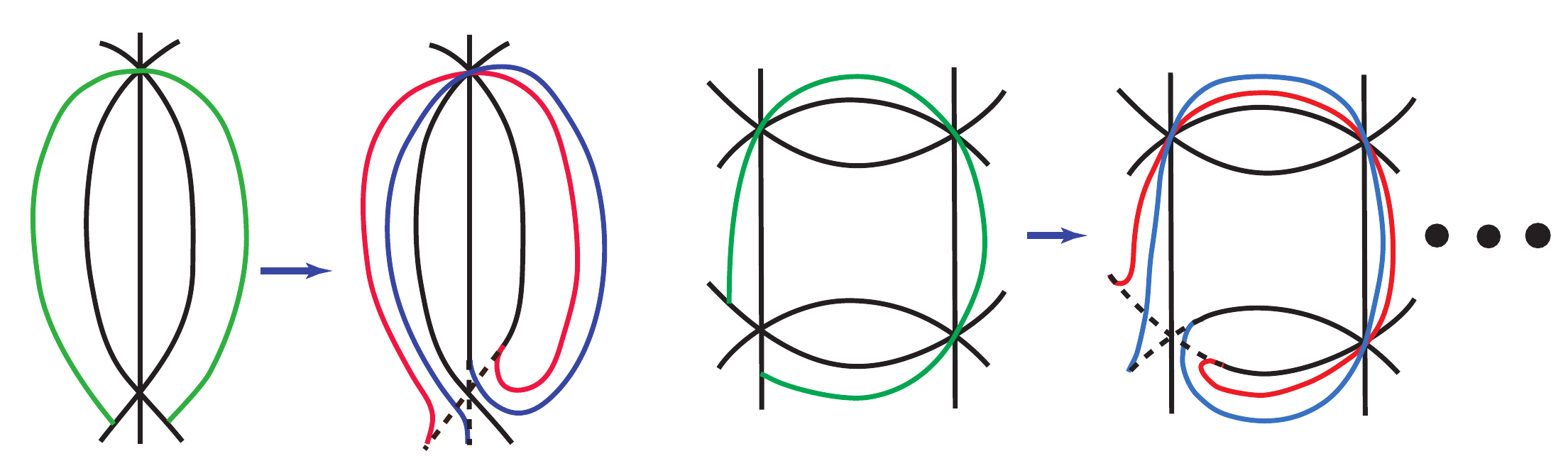}}
\caption{Converting 3-crossings into 5-crossings around an appropriate $n$-gon for $n$ even.}
\label{badcaseseven}
\end{figure}

\begin{figure}[htbp]
\vspace*{13pt}
\centerline{\includegraphics*[scale=.6]{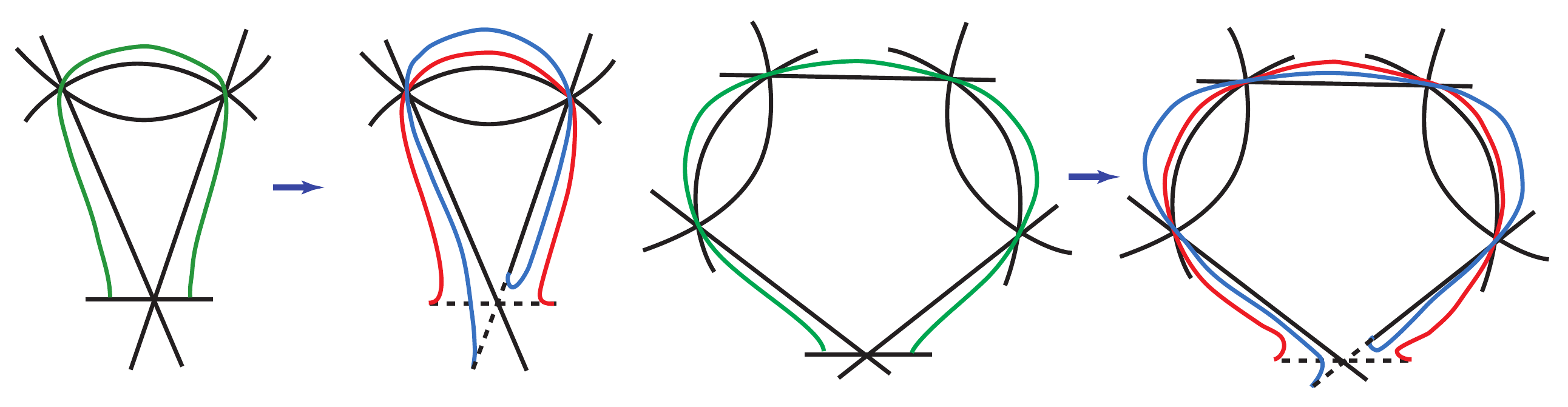}}
\caption{Converting 3-crossings into 5-crossings around an appropriate $n$-gon for $n$ odd.}
\label{badcasesodd}
\end{figure}

The goal now is to show that in any 3-diagram of a nontrivial nonsplit link that is not the Hopf link, a complemetary region which is either a monogon or a polygon of the kind described in Lemma~\ref{lemma:badcases} must exist. We will need the following observation.

\begin{lemma}Let $f_i$  be the number of faces with $i$ edges in any 3-diagram, including the outer region. Then
 $$2f_1 + f_2 = 6 + f_4 + 2f_5 + 3f_6 + \dots.$$
\end{lemma}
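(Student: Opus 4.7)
The plan is to treat the 3-diagram as a 4-valent\,... actually 6-valent planar graph drawn on the 2-sphere (the projection plane plus the point at infinity, which places the outer region on equal footing with the others) and apply Euler's formula together with two edge counts.

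First I would set up notation: let $V$ be the number of vertices (i.e., triple crossings), let $E$ be the number of edges of the underlying 4-regular planar graph, and let $F=\sum_i f_i$ be the total number of complementary faces (including the outer region). The key structural input is that at a triple crossing three arcs cross pairwise transversely, so six edge-endpoints meet at each vertex. Summing degrees over vertices therefore gives $2E=6V$, i.e.\ $E=3V$. Euler's formula for the 2-sphere yields $V-E+F=2$, so $F=2+E-V=2+2V$.

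Next I would do the standard double-count of edge-face incidences. Every edge borders exactly two faces (they could coincide, but each incidence is counted once), so
\[
\sum_i i\,f_i \;=\; 2E \;=\; 6V.
\]
Now I would eliminate $V$ between the two relations. Multiplying $F=2+2V$ by $3$ gives $3\sum_i f_i = 6+6V = 6 + \sum_i i f_i$, hence
\[
\sum_i (3-i)\,f_i \;=\; 6.
\]
Expanding this sum gives precisely $2f_1+f_2 - f_4 - 2f_5 - 3f_6 - \cdots = 6$, which rearranges to the asserted identity.

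The only slightly subtle point to justify is that the diagram, viewed as a graph in $S^2$, genuinely has all vertices of degree $6$ and that every edge contributes $2$ to the face-degree sum (including self-loops, which border the same face twice); once those standard conventions are in place, the argument is an entirely routine Euler-characteristic calculation, so I do not expect any serious obstacle.
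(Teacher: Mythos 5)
Your proof is correct and is essentially the same as the paper's: Euler's formula on the sphere, the degree count $2E=6V$ at the $6$-valent crossings, and the edge--face incidence count $\sum_i i f_i = 2E$, combined to give $\sum_i(3-i)f_i=6$. (Only a typo to fix: you twice call the graph $4$-regular/$4$-valent before correctly using degree $6$.)
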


\begin{proof} Considering the  Euler characteristic gives $v-e+f = 2.$ But $e = 6v/2$, so $v = e/3$.
We now have $f = 2 + \frac{2e}{3}$, or equivalently $3f = 6 + 2e$,  where  $f = f_1 + f_2 + f_3 +\dots$ and $e = \frac{f_1 + 2f_2 + 3f_3 +\dots}{2}$. Thus, $3( f_1 + f_2 + f_3 + \dots )= 6 + f_1 +2f_2 + 3f_3 + \dots$ which yields the result. 
 \end{proof}

\begin{theorem} If $L$ is a nontrivial, nonsplit link, other than the Hopf link, then $c_3(L) > c_5(L).$
\end{theorem}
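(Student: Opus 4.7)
Let $D$ be a 3-diagram of $L$ with $c_3(L)$ crossings. The aim is to produce a 5-diagram of $L$ with strictly fewer crossings, which establishes $c_5(L) < c_3(L)$. Because $L$ is nonsplit, $D$ is connected; because $L$ is nontrivial, nonsplit, and distinct from the Hopf link, $D$ must contain at least two 3-crossings (any 3-diagram with a single 3-crossing represents only the unknot, the Hopf link, or a split three-component unlink). If $D$ has a monogon among its complementary faces, Lemma~\ref{yesmonogons} applies directly and produces the required 5-diagram. Otherwise $f_1=0$, and the preceding Euler identity gives
\[
f_2 \;\geq\; 6 + f_4 + 2f_5 + 3f_6 + \cdots \;\geq\; 6,
\]
so $D$ contains at least six bigon faces.

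I would then split according to whether two bigons of $D$ share an edge. If they do, then either bigon, regarded as a face $F$ of size $n=2$, has one of its two edges shared with another bigon, so Lemma~\ref{lemma:badcases} applies with $n=2$, converting two 3-crossings into a single 5-crossing. Otherwise every bigon has both edges bordering non-bigon faces, so the $2f_2\geq 12$ bigon-edge incidences are distributed among the non-bigon faces of $D$; the task becomes to find a non-bigon face $F$ of size $n\geq 3$ whose bigon-adjacent edges occur in the alternating cyclic pattern required by Lemma~\ref{lemma:badcases} (every other edge for even $n$, or the same with a single adjacent exception for odd $n$).

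The main obstacle is this last step: Lemma~\ref{lemma:badcases} is sensitive to the cyclic arrangement of bigon-edges around $F$, not merely to their count, so a direct averaging argument is not sufficient. My plan is to attack the problem with a discharging argument on the underlying 6-regular planar graph of $D$: assign each $n$-face the charge $n-3$ so that the Euler identity becomes a zero-sum balance equation, and then transfer charge from the negatively-charged bigons to their non-bigon neighbors under rules crafted so that any non-bigon face which absorbs sufficient charge is forced to exhibit the required alternation. Under the hypothesis that no face satisfies Lemma~\ref{lemma:badcases}, each non-bigon face's receivable charge is strictly bounded, yielding a contradiction with the total charge equation and so producing the desired face. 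The Hopf link is excluded precisely to guarantee that $D$ has at least two vertices, since its minimum 3-diagram (three self-loops at a single vertex) is the one degenerate configuration for which neither Lemma~\ref{yesmonogons} nor Lemma~\ref{lemma:badcases} applies.
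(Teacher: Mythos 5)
There is a genuine gap. Your setup matches the paper's: reduce to the case with no monogons, invoke the Euler-characteristic identity $f_2 = 6 + f_4 + 2f_5 + 3f_6 + \cdots$, and then try to locate a face satisfying the hypotheses of Lemma~\ref{lemma:badcases}. But the decisive step --- showing that if \emph{no} face satisfies those hypotheses then the count of bigons is forced to be too small --- is exactly the step you leave unproved. You acknowledge that the cyclic arrangement of bigon-edges matters and propose a discharging scheme with charge $n-3$ per $n$-face, but you never specify the discharging rules, never prove that a face failing the lemma's hypothesis can absorb only a bounded amount of charge, and never derive the contradiction. A ``plan to craft rules so that the contradiction appears'' is not a proof; as written, the argument stops precisely where the real work begins.

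The missing ingredient is an explicit per-face bound, and once you have it no discharging machinery is needed. The paper observes that a face with $n$ edges that does \emph{not} satisfy the hypothesis of Lemma~\ref{lemma:badcases} can have at most $n-2$ of its edges shared with bigons when $n$ is even, and at most $n-3$ when $n$ is odd (in particular a bigon or triangle can border no bigons at all, and a quadrilateral or pentagon at most two). Summing these bounds over all faces counts each bigon twice (once for each of its two edges), giving
\[
f_2 \;\le\; f_4 + f_5 + 2f_6 + 2f_7 + 3f_8 + 3f_9 + \cdots,
\]
which is incompatible with $f_2 = 6 + f_4 + 2f_5 + 3f_6 + \cdots$. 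To repair your argument you must either prove these bounds and run this double count, or fully specify and verify a discharging scheme that encodes the same information; the former is shorter. (Two smaller points: with $f_1=0$ the Euler identity is an equality, not merely ``$f_2\ge 6$,'' and the full strength of the right-hand side is needed for the contradiction; and your parenthetical classification of single-crossing 3-diagrams should be restricted to \emph{connected} diagrams, where the only possibilities are trivial links and the Hopf link.)
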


 \begin{proof} Let $D$ be a 3-diagram of $L$ that realizes the 3-crossing number $c_3(L)$. Because $L$ is nonsplit, $D$ must be connected. It is easy to check that the only connected 3-diagram with a single 3-crossing is either a trivial link or the Hopf link. Thus $D$ must have at least two crossings. If $D$ contains a monogon, then we are done by Lemma \ref{yesmonogons}.  Otherwise we have $f_1 = 0$ and
 \begin{equation}\label{formula for f_2}
f_2 = 6 + f_4 + 2f_5 + 3f_6 + \dots 
\end{equation}

If any of the cases that occur in Lemma \ref{lemma:badcases} appear in the 3-diagram $D$, we are done. So assume no such case occurs. In particular, this means that no two bigons share an edge and no bigon shares an edge with a triangle, etc.  
 Now let's count how many bigons could be present. Each  bigon can have no others on its boundary. Each triangle can have none on its boundary. Each quadrilateral can have at most two bigons on its boundary, where they are not opposite. Each pentagon can have at most two bigons on its boundary, for if it had three, two would be nonadjacent. More generally, for each $n$, if $n$ is even, there can be at most  $n-2$ bigons on the boundary to avoid one of the cases from Lemma \ref{lemma:badcases}. If $n$ is odd, there can be at most $n-3$ bigons on the boundary.

But counting bigons this way, we have counted them twice. So the conclusion is that \[f_2 \leq \frac{2f_4 + 2 f_5 + 4f_6 + 4f_7 + 6 f_8 + 6f_9 + \dots}{2}\]

So \[f_2 \leq  f_4 + f_5 + 2 f_6 + 2 f_7 + 3 f_8 + 3 f_9+ \dots\]

\medskip

But this is too small, contradicting Equation~\ref{formula for f_2}. Thus  $D$ must contain a face as in  Lemma \ref{lemma:badcases}, allowing us to convert $D$ into a 5-diagram with fewer crossings.
\end{proof}

 \end{document}